\newcommand{\jonly}[1]{}
\newcommand{\aronly}[1]{#1}
\def\Z{{\mathbb Z}} \def\R{{\mathbb R}}  
\long\def\comment#1\endcomment{}
\def\Hom{\mathop{\fam0 Hom}}
\def\pr{\mathop{\fam0 pr}}
\def\diag{\delta}
\def\diam{\mathop{\fam0 diam}}
\def\pii{\rho}
\def\id{\mathop{\fam0 id}}
\def\st{\mathop{\fam0 st}}
\def\Int{\mathop{\fam0 Int}}
\def\rel{\mathop{\fam0 rel}}
\def\im{\mathop{\fam0 im}}
\renewcommand{\t}[1]{\ensuremath{\widetilde{#1}}}
\theoremstyle{plain}
\newtheorem{Theorem}{Theorem}[section]
\newtheorem{Lemma}[Theorem]{Lemma}
\newtheorem{Proposition}[Theorem]{Proposition}
\theoremstyle{definition}
\newtheorem{Remark}[Theorem]{Remark}
\begin{document}

\title{Eliminating higher-multiplicity intersections in the metastable dimension range}

\author{Arkadiy Skopenkov\footnote{
Email: \texttt{skopenko@mccme.ru}. \texttt{https://users.mccme.ru/skopenko}.
\aronly{Independent University of Moscow, Moscow Institute of Physics and Technology.
Research supported by Simons-IUM Fellowship and by the D. Zimin's Dynasty Foundation Grant.}
\newline
I am grateful to S. Avvakumov, R. Karasev, S. Melikhov and an anonymous referee for helpful discussions of this paper, and I. Mabillard and U. Wagner for discussions of \cite{MW16}.}
}

\date{}
\maketitle

\begin{abstract}
The procedure to remove {\it double} intersections called the {\it Whitney trick} is one of the main tools in the topology of manifolds.
The analogues of Whitney trick for {\it $r$-tuple} intersections were `in the air' since 1960s.
However, only recently they were stated, proved and applied to obtain interesting results.
Here we prove and apply the $r$-fold Whitney trick when general position $r$-tuple intersection has positive dimension.
A continuous map $f\colon M \to B^d$ from a manifold with boundary to the $d$-dimensional ball is called {\it proper}, if $f^{-1}(\partial B^d)=\partial M$.

{\bf Theorem.} {\it Let $D=D_1\sqcup\ldots\sqcup D_r$ be disjoint union of $k$-dimensional disks,
and $f:D\to B^d$ a proper map such that $f\partial D_1\cap\ldots\cap f\partial D_r=\emptyset$ and
the map
$$f^r:\partial(D_1\times\ldots\times D_r)\to (B^d)^r-\{(x,x,\ldots,x)\in(B^d)^r\ :\ x\in B^d\}$$
extends continuously to $D_1\times\ldots\times D_r$.
If $rd\ge (r+1)k+3$, then there is a proper map $\overline f:D\to B^d$ such that
$\overline f=f$ on $\partial D$ and $\overline fD_1\cap\ldots\cap \overline fD_r=\emptyset$. }
\end{abstract}

\noindent
{\em MSC 2020}: 57Q35, 57R40, 55S91, 52A35, 57R12, 57R25, 57R65, 57R42.

\noindent
{\em Keywords:} multiple points of maps, multiple Whitney trick, equivariant map, immersion, almost embedding, embedded surgery, Pontryagin-Thom construction.

\tableofcontents

\comment

Dear Editors:

Concerning the article
Eliminating higher-multiplicity intersections in the metastable dimension range
submitted on ??? 2022 I feel obliged to inform you that there are

* a negative review to the previous version of the paper (close to the submitted version);

* a detailed, public, and not publicly opposed justification that the review is incompetent, see https://arxiv.org/abs/2101.03745v3, Example 6.2 (and Example 6.1 for a wider overview).

(The review is quoted by parts in Example 6.2 and is available in its complete form upon request.)

So let me suggest, if I may, that a report on the submitted paper from a specialist in topological combinatorics could be read critically, and balanced by a report from a generic topologist.
(Because the reason for writing the incompetent report and for its approval by the Editors of Israel J. Math. is likely to be an attempt to suppress my detailed, public, and not publicly opposed criticism of some papers in topological combinatorics.)

Thank you,
Arkadiy Skopenkov

{\bf Hypergraph drawings without multiple self-intersections. A. Skopenkov}

Algorithms for recognizing graph planarity are well-known.
The classical topics in mathematics and computer science is recognizing realizability of hypergraphs in $d$-dimensional Euclidean space $\mathbb R^d$.
In these studies, as well as in studies of Radon-Tverberg-type problems from topological combinatorics, the following notion appeared, see e.g. survey \cite{Sk18}.
Denote by $\Delta_n$ a simplex with $n$ vertices.
The {\bf body} (or geometric realization) $|K|$ of a hypergraph $K=(V,E)$ is the union of faces of $\Delta_{|V|}$ corresponding to elements of $E$.
An {\bf almost $r$-embedding} of a hypergraph $K$ is a piecewise-linear (PL) map $f\colon |K|\to \mathbb R^d$ such that the images of any $r$ pairwise disjoint faces of $K$ do not have a common point.
A {\bf $k$-dimensional hypergraph} is a hypergraph whose maximal hyperedge has $k$ elements.
By general position, for $(r-1)d>rk$ every $k$-dimensional hypergraph almost $r$-embeds in $\mathbb R^d$.
For fixed $r$ and $d$, a polynomial algorithm for recognition almost $r$-embeddability of $k$-hypergraphs in $\mathbb R^d$ was constructed

$\bullet$ in \cite{MW15, AMSW} for $(r-1)d=rk$;

$\bullet$ in \cite{Sk17} for $rd\ge(r+1)k+3$, see also \cite{MW16}.

\bibitem[AMS+]{AMSW} \emph{S. Avvakumov, I. Mabillard, A. Skopenkov and U. Wagner.}
Eliminating Higher-Multiplicity Intersections, III. Codimension 2, Israel J. Math., to appear, arxiv:1511.03501.

\bibitem[MW15]{MW15} \emph{I. Mabillard and U. Wagner.}
Eliminating Higher-Multiplicity Intersections, I. A Whitney Trick for Tverberg-Type Problems. arXiv:1508.02349.

\bibitem[MW16]{MW16} \emph{I. Mabillard and U. Wagner.} Eliminating Higher-Multiplicity Intersections, II. The Deleted Product Criterion in the $r$-Metastable Range. arxiv:1601.00876.

\bibitem[Sk17]{Sk17} \emph{A. Skopenkov.}
Eliminating higher-multiplicity intersections in the metastable dimension range,  arxiv:1704.00143.

\bibitem[Sk18]{Sk18} \emph{A. Skopenkov.} Invariants of graph drawings in the plane.
Abridged version: Arnold Math. J., 2020; full version: arXiv:1805.10237.

{\bf Eliminating higher-multiplicity intersections. A. Skopenkov}

\bigskip
{\bf RESEARCH REPORT}

\smallskip
We study conditions under which a finite simplicial complex $K$ can be mapped to $\R^d$ without $r$-fold
intersections.
An \emph{almost $r$-embedding} is a map $f\colon K\to \R^d$ such that the images of any $r$ pairwise disjoint simplices of $K$ do not have a common point.

\begin{Theorem}\label{t:tve} \cite{AMSW} If $r$ is not a prime power and $d\geq 2r+1$, then there is an almost $r$-embedding of the $(d+1)(r-1)$-dimensional simplex in $\R^d$.
\end{Theorem}

This is a counterexample to the topological Tverberg conjecture improving earlier counterexamples (for $d\geq 3r$) based on a series of papers by M.\ \"Ozaydin, M.\ Gromov, P.\ Blagojevi\'{c}, F.\ Frick, G.\ Ziegler, I. Mabillard and U. Wagner.
Theorem \ref{t:tve} is obtained by proving the case $d=2r=k+2$ of Theorem \ref{t:mmw} below.
As another application, we classify \emph{ornaments} $f\colon S^3 \sqcup S^3 \sqcup S^3\to \R^5$ up to \emph{ornament concordance}.


\begin{Theorem}[Local Disjunction]\label{l:ldm} Assume that $D=D_1\sqcup\ldots\sqcup D_r$ is disjoint union of $k$-dimensional disks,
$$(*)\quad\text{either}\quad rd\ge(r+1)k+3\quad\text{or}\quad d=2r=k+2,$$
and $f:D\to B^d$ a proper PL (smooth) map such that $f\partial D_1\cap\ldots\cap f\partial D_r=\emptyset$.
If the map
$$f^r:\partial(D_1\times\ldots\times D_r)\to (B^d)^r-\{(x,x,\ldots,x)\in(B^d)^r\ |\ x\in B^d\}$$
extends to $D_1\times\ldots\times D_r$, then there is a proper PL (smooth) map $\overline f:D\to B^d$ such that
$\overline f=f$ on $\partial D$ and $\overline fD_1\cap\ldots\cap \overline fD_r=\emptyset$.
\end{Theorem}

\begin{Theorem}[Mabillard-Wagner]\label{t:mmw}
Let $K$ be a finite $k$-dimensional complex, (*) holds and $(d,k,r)\ne(4,2,2)$.
There exists an almost $r$-embedding $f:K\to\R^d$ if and only if there exists a $\Sigma_r$-equivariant map
$$K^{\times r}_{\Delta}:= \bigcup \{ \sigma_1\times\cdots\times\sigma_r
\ : \sigma_i \textrm{ a simplex of }K,\ \sigma_i \cap \sigma_j = \emptyset \mbox{ for every $i \neq j$} \}
\to \R^{d\times r}-\diag\phantom{}_r.$$
\end{Theorem}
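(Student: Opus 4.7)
The ``only if'' direction is immediate: given an almost $r$-embedding $f\colon K\to\R^d$, the map $f^r$ sends $K^{\times r}_\Delta$ into $\R^{d\times r}-\diag_r$ by the definition of almost $r$-embedding, and it is $\Sigma_r$-equivariant tautologically.

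For the ``if'' direction, fix a $\Sigma_r$-equivariant map $\varphi\colon K^{\times r}_\Delta\to\R^{d\times r}-\diag_r$ and take a PL general-position map $f\colon K\to\R^d$. The hypothesis $rd\ge(r+1)k+3$ is exactly condition $(*)$ of Theorem~\ref{l:ldm} for any $n_i=\dim\sigma_i\le k$; each closed simplex is a contractible (hence $(w+1)$-connected), stably parallelizable PL disk; so the local hypotheses of Theorem~\ref{l:ldm} hold at every pairwise disjoint $r$-tuple of simplices. The plan is to process pairwise disjoint unordered $r$-subsets $\{\sigma_1,\ldots,\sigma_r\}$ of simplices of $K$ in order of increasing total dimension $\dim\sigma_1+\cdots+\dim\sigma_r$, maintaining the inductive invariant that, after all such $r$-subsets of total dimension $\le N$ have been processed, (i)~$f\sigma_1\cap\cdots\cap f\sigma_r=\emptyset$ for every such processed $r$-subset, and (ii)~on the $\Sigma_r$-invariant subcomplex $X_{\le N}\subset K^{\times r}_\Delta$ formed by the corresponding cells, $f^r$ lands in $\R^{d\times r}-\diag_r$ and is $\Sigma_r$-equivariantly homotopic there to $\varphi|_{X_{\le N}}$.

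In the inductive step, for the next $r$-subset $\{\sigma_1,\ldots,\sigma_r\}$ the invariant applied to proper faces gives $f\partial\sigma_1\cap\cdots\cap f\partial\sigma_r=\emptyset$; moreover $f^r|_{\partial(\sigma_1\times\cdots\times\sigma_r)}$ is homotopic to $\varphi|_{\partial(\sigma_1\times\cdots\times\sigma_r)}$ in $\R^{d\times r}-\diag_r$, and the latter is null-homotopic because it bounds $\varphi|_{\sigma_1\times\cdots\times\sigma_r}$. Pick small concentric subdisks $D_i\subset\Int\sigma_i$ and a PL ball $B^d\subset\R^d$ containing $f(\bigsqcup_i D_i)$ in its interior so that $f|_{\bigsqcup_i D_i}$ becomes a proper Whitney map into $B^d$. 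Theorem~\ref{l:ldm} then delivers a modification $\overline f$ supported in $\bigcup_i\Int D_i$ with $\overline f D_1\cap\cdots\cap\overline f D_r=\emptyset$, whence $\overline f\sigma_1\cap\cdots\cap\overline f\sigma_r=\emptyset$; Theorem~\ref{l:ldmr} can then be invoked to realign the homotopy class of $\overline f^r$ on $\sigma_1\times\cdots\times\sigma_r$ with that of $\varphi$, propagating invariant (ii) to the newly added cell.

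The principal obstacle is that modifying $f$ on $\Int D_i\subset\Int\sigma_i$ can change $f(\tau)$ for every closed simplex $\tau\supseteq\sigma_i$, and so may a priori recreate an intersection already eliminated for some other $r$-tuple. This is handled by choosing the subdisks $D_i$ and the ball $B^d$ small enough that the modification is arbitrarily $C^0$-close to $f$ in $\R^d$: an empty intersection of compact images $f(\tau_1),\ldots,f(\tau_r)$ is robust under sufficiently small $C^0$ perturbations, because $f(\tau_1)\times\cdots\times f(\tau_r)$ is a compact subset of $\R^{d\times r}$ disjoint from $\diag_r$ and therefore lies at positive distance from it. With this precaution the finite induction terminates and yields the desired almost $r$-embedding.
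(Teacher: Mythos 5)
Your overall scheme --- induction over $\Sigma_r$-orbits of cells $\sigma_1\times\cdots\times\sigma_r$ of $K^{\times r}_{\Delta}$ in order of increasing dimension, maintaining both the emptiness of the processed $r$-fold intersections and an equivariant homotopy from $f^r$ to $\varphi$ on the processed subcomplex, with Theorem \ref{l:ldm} killing the new intersection and Theorem \ref{l:ldmr} correcting the homotopy class afterwards --- is exactly how the paper organizes the argument (it packages the inductive invariant as the relative Theorem \ref{t:mmwi}). You also correctly identified the principal obstacle. But your resolution of that obstacle does not work, and this is precisely where the real content of the reduction lies.

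The difficulty is that the ball in which Theorem \ref{l:ldm} is applied cannot be small. To conclude $\overline f\sigma_1\cap\cdots\cap\overline f\sigma_r=\emptyset$ from $\overline fD_1\cap\cdots\cap\overline fD_r=\emptyset$, every point of the general position intersection $f\sigma_1\cap\cdots\cap f\sigma_r$ must have a preimage in some $D_i$, and the ball must contain this whole intersection in its interior; but that intersection is a $w$-dimensional set spread out in $\R^d$ (even for $w=0$ its points are scattered --- this is why the classical Whitney trick is not a small perturbation). So if your $D_i$ and your ball are genuinely small, the implication ``whence $\overline f\sigma_1\cap\cdots\cap\overline f\sigma_r=\emptyset$'' fails; and if they are large enough for that implication to hold, then the map $\overline f$ produced by Theorem \ref{l:ldm} --- which is constrained only to agree with $f$ on $\partial N$ and to map into the ball --- is not $C^0$-close to $f$, so the ``positive distance from $\diag_r$'' robustness argument collapses, and the modification may recreate intersections for already-processed $r$-tuples involving simplices $\tau\supseteq\sigma_i$. (A smaller point: a ball containing $f(\bigsqcup_iD_i)$ in its \emph{interior} makes $f|_{\bigsqcup_iD_i}$ non-proper; one must take $N_i=\sigma_i\cap f^{-1}\beta$.) The missing ingredient is the engulfing step: one constructs a PL $d$-ball $\beta$ with $f\sigma_1\cap\cdots\cap f\sigma_r\subset\Int\beta$, with $N_i:=\sigma_i\cap f^{-1}\beta$ a disk in $\Int\sigma_i$ on which $f$ is proper, and --- crucially --- with $\beta\cap fK=\beta\cap f\st(\sigma_1\sqcup\ldots\sqcup\sigma_r)$. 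It is this last condition, not smallness, that controls the side effects of the modification: the only parts of $K$ whose images can interact with what happens inside $\beta$ are the stars of the $\sigma_i$, and cells of $K^{\times r}_{\Delta}$ built from those stars are handled because their factors are pairwise disjoint. Constructing such a $\beta$ is a genuine general-position/engulfing argument (Weber's method, corrected in \S\ref{s:prmmw}), and without it the induction does not close.
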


Theorems \ref{l:ldm} and \ref{t:mmw} were proved (or essentially proved)

$\bullet$ for $d=sr=k+s$, where $s\ge3$, in \cite{MW15},

$\bullet$ for $d=2r=k+2$ in \cite{AMSW},

$\bullet$ for $rd\ge(r+1)k+3$ \cite{MW16, Sk17}.

Theorem \ref{t:mmw} implies the existence of a polynomial algorithm for checking almost $r$-embeddability
(for fixed $k,d,r$ such that (*) holds and $(d,k,r)\ne(4,2,2)$), cf. \cite[Corollary 5]{MW15}.
Theorem \ref{t:mmw} also reduces improvement of counterexamples to the topological Tverberg conjecture to an algebraic problem, cf. \cite[paragraph before Theorem 1.6]{Sk16}.

It follows from work of M.\ Freedman, V.\ Krushkal, and P.\ Teichner that the analogue of Theorem \ref{t:mmw} for $d=2r=k+2=4$ is false.
We prove a lemma on singular higher-dimensional Borromean rings, yielding an elementary
proof of the counterexample.
This lemma, together with the technique of \cite{MTW08}, is one of the main ingredients for the following result.

\begin{Theorem} \cite{ST17} Fix integers $d,k\ge2$ such that $d=\frac{3k}2+1$.

(a) Assume that P~$\ne$~NP.
Then there exists a finite $k$-dimensional complex $K$ that does not admit an
almost 2-embedding in $\R^d$ but for which there exists an equivariant map
$K^{\times 2}_{\Delta}\to\R^{d\times 2}-\diag\phantom{}_2$.

(b) The algorithmic problem of recognition of almost 2-embeddability of
finite $k$-dimensional complexes in $\R^d$ is NP-hard.
\end{Theorem}

\endcomment

\section{Introduction and main results}\label{s:mr}

In this paper we prove and apply a result on removal $r$-tuple intersection ($r$-fold Whitney trick)
in the dimension range when general position $r$-tuple intersection has positive dimension.

We omit `continuous' for maps and extensions, and `compact smooth, possibly with boundary' for manifolds.
Let $B^d:=\{x\in\R^d\ :\ x_1^2+\ldots+x_d^2\le1\}$ and $S^{d-1}=\partial B^d$ be the standard ball and sphere.
Denote by $\partial M$ and $\Int M$ the boundary and the interior of a manifold $M$, respectively.
A map $f\colon M \to B^d$ from a manifold is called {\bf proper}, if $f^{-1}S^{d-1}=\partial M$
(this should not be confused with the term `proper' from general topology).
Remarks of this text are formally not used in the sequel (except for (\ref{r:motiv}.*) and the notation $n$ of Remark \ref{r:motiv}.b).

\begin{Remark}[motivation]\label{r:motiv}
(a) The procedure to remove {\it double} intersections of a map called the {\it Whitney trick}
is one of the main tools in the topology of manifolds.
The first application was the 1944 Strong Whitney Embedding Theorem:
{\it any compact $k$-dimensional manifold embeds into $\R^{2k}$.}

Consider a proper general position map $f:N_1\sqcup N_2\to B^d$ of a disjoint union of $n_1$- and $n_2$-dimensional manifolds.
The Whitney trick was first proved (and applied) for $d=n_1+n_2$, i.e., for the case when general position intersection $fN_1\cap fN_2$ is a finite number of points.
The condition for removing intersection was stated in terms of the algebraic intersection number.
The following version of the Whitney trick is a predecessor of Theorems \ref{t:hawe} and \ref{l:ldm}.

{\it If $d-3\ge n_1,n_2$ and the algebraic
intersection number $fN_1\cdot fN_2\in\Z$ is zero, then there is a proper general position map}
$$\overline f:N_1\sqcup N_2\to B^d\quad\text{such that}\quad \overline f=f\text{ on }
\partial N_1\sqcup N_2\quad\text{and}\quad \overline fN_1\cap \overline fN_2=\emptyset.$$
See a classical exposition e.g. in \cite[Whitney Lemma 5.12]{RS72}, cf. \cite{Skw}.

The generalization to the (`metastable') case $2d\ge2n_1+n_2+3$ is essentially due to Haefliger and Weber \cite{Ha63, We67}.
The condition for removing intersection was stated in terms of extension of certain map (of `configuration space'), see Theorem \ref{t:hawe} below.
This is related to `h-principle for embeddings' \cite[2.1.E]{Gr86}.
Guessing the statement, i.e., passing from algebraic sum of the intersection points to the extension condition is easy.
Proof of the necessity is easy.
Proof of the sufficiency (namely, removal of intersections) is hard, see exposition e.g. in \cite[\S8]{Sk06}.

(b) The analogues of Whitney trick for removing {\it $r$-tuple} intersections
were `in the air' since 1960s, see \cite[Remark 3.6]{Sk16}.
However, only recently they were stated, proved and applied to obtain interesting results
\cite{MW15, AMSW, MW16}, \cite[proof of Theorem 1.1 in pp. 7-8]{Me17}, \cite{Me18}.
Consider a proper map $f:N_1\sqcup\ldots\sqcup N_r\to B^d$ of a disjoint union of $n_1,\ldots,n_r$-dimensional manifolds, and let
$$n=n_1+n_2+\ldots+n_r.$$
The analogues were proved for the case when general position intersection $fN_1\cap\ldots\cap fN_r$
is a finite number of points, i.e., for
$$(r-1)d=n.$$
Here we prove and apply the $r$-fold Whitney trick when general position $r$-tuple intersection has positive dimension. More precisely, we consider the (`metastable') case
$$(\ref{r:motiv}.*)\quad rd\ge n+n_i+3\quad\text{for each }i=1,\ldots,r.$$
See the Metastable Local Disjunction Theorem \ref{l:ldm} below.
See \cite{Sk16} for a survey and Remark \ref{r:imm} for relations to the above papers.

(c) Maps without $r$-fold intersections (more precisely, $r$-almost embeddings defined before Theorem \ref{t:mmw} below) are studied in topological combinatorics even more actively than in topology.
The main motivation for $r$-fold Whitney trick in \cite{MW15, AMSW, MW16} were counterexamples
to the topological Tverberg conjecture, which was considered a central problem of topological combinatorics.
See \cite{Sk16} for a survey (and for references to other surveys).
The counterexamples are maps of $k$-complexes to $\R^d$ without certain $r$-tuple self-intersections.
The counterexamples were obtained for the case when general position $r$-tuple self-intersection is a finite number  of points, i.e., for $(r-1)d=rk$.
One of the main results (Metastable Mabillard-Wagner Theorem \ref{t:mmw}) allows to obtain in \cite{AKS} improved counterexamples to the topological Tverberg conjecture.
These counterexamples appear for the case when general position $r$-tuple self-intersection has positive dimension, i.e., for $rd\ge(r+1)k+3$.
\end{Remark}

Let $B^{d\times r}:=(B^d)^r$.
Let $\R^{d\times r}:=(\R^d)^r$ be the set of real $d\times r$-matrices.
Denote
$$
\diag\phantom{}_r:=\{(x,x,\ldots,x)\in \R^{d\times r}\ |\ x\in\R^d\}\quad\text{and}\quad N^\times:=N_1\times\ldots\times N_r.$$
Define a map
$$f^r:N^\times\to B^{d\times r}\quad\text{by}\quad f^r(x_1,\ldots,x_r):=(fx_1,\ldots,fx_r).$$
We use the same notation $f^r$ for maps defined by the same formula on subsets of $N^\times$,
and assuming values in $B^{d\times r}-\diag_r$; the domain and the range are specified, so no confusion will appear.

The following theorem is a (`relative link map') version of the celebrated Weber Theorem 
\cite[the Weber Theorem 8.1]{Sk06}, and is (a stronger version of) the case $r=2$ 
of the Metastable Local Disjunction Theorem \ref{l:ldm} below.

\begin{Theorem}\label{t:hawe}
Let $f:N_1\sqcup N_2\to B^d$ be a proper map of a disjoint union of $n_1$- and $n_2$-dimensional manifolds such that $2d\ge2n_2+n_1+3$ and $f\partial N_1\cap f\partial N_2=\emptyset$.
There exists a map $\overline f:N\to B^d$ such that
$$\overline f=f \quad\text{on}\quad \partial N_1\sqcup N_2\quad\text{and}\quad
\overline fN_1\cap \overline fN_2=\emptyset$$
if and only if the map\footnote{We have $f^2\partial(N_1\times N_2)\cap\diag_2=\emptyset$ because $f\partial N_1\cap f\partial N_2=\emptyset$ and $f$ is proper.}
$f^2:\partial(N_1\times N_2)\to B^{d\times 2}-\diag_2$ extends to $N_1\times N_2$.
\end{Theorem}

Proof of the `only if' part is simple and is analogous to the proof of the `only if' part of Theorem \ref{l:ldm} below.
Concerning the `if' part  see Remark \ref{r:ifhawe} below.

An {\bf $(n_1,\ldots,n_r)$-Whitney map} is a proper map $f:N\to B^d$ of disjoint union
$N=N_1\sqcup\ldots\sqcup N_r$ of manifolds\footnote{The most important particular case sufficient for Theorem \ref{t:mmw} below is when each $N_i$ is a disk.
For the inductive proof of Theorem \ref{l:ldm} in this particular case we need the above more general notion.}
of dimensions $n_1,\ldots,n_r$ such that the equality (\ref{r:motiv}.*) holds.
Denote
$$w:=n-(r-1)d.$$
This is the dimension of general position $r$-tuple intersection $fN_1\cap\ldots\cap fN_r$.
The inequality (\ref{r:motiv}.*) is equivalent to $d\ge n_i+w+3$ (which is more convenient for the proof of Theorem \ref{l:ldm}).

A manifold is {\bf stably parallelizable} if it allows a codimension one immersion with a non-zero normal vector field.
A manifold $M$ is {\bf $k$-connected} if for every $j=0,\ldots,k$ any map $S^j\to M$ is homotopic to the constant map.

\begin{Theorem}[Metastable Local Disjunction]\label{l:ldm}
Let $f:N\to B^d$ be an $(n_1,\ldots,n_r)$-Whitney map of a disjoint union of $(w+1)$-connected stably  parallelizable manifolds such that $f\partial N_1\cap\ldots\cap f\partial N_r=\emptyset$.
There exists a map $\overline f:N\to B^d$ such that
$$\overline f=f \quad\text{on}\quad \partial N \quad\text{and}\quad
\overline fN_1\cap\ldots\cap \overline fN_r=\emptyset$$
if and only if the map\footnote{We have $f^r\partial N^\times\cap \diag_r=\emptyset$ because $f\partial N_1\cap\ldots\cap f\partial N_r=\emptyset$ and $f$ is proper.}
$f^r:\partial N^\times\to B^{d\times r}-\diag_r$ extends to $N^\times$.
\end{Theorem}

\begin{proof}[Proof of the `only if' part]
Take the linear homotopy $f_t$ betwen $f_0=f$ and $f_1=\overline f$.
We have $f_t=f_0=f_1$ on $\partial N$.
Hence $f_t^r:\partial N^\times\to B^{d\times r}-\diag_r$ is a homotopy from
$f^r:\partial N^\times\to B^{d\times r}-\diag_r$
to the restriction to $\partial N^\times$ of $\overline f^r:N^\times\to B^{d\times r}-\diag_r$.
So by the Borsuk Homotopy Extension Theorem\footnote{\label{f:bhet}This theorem states that if $(K,L)$ is a polyhedral pair, $Z\subset\R^m$, \ $F:L\times I\to Z$ is a homotopy, and $g:K\to Z$ is a map such that $g|_L=F|_{L\times0}$, then $F$ extends to a homotopy $G:K\times I\to Z$ such that $g=G|_{K\times0}$.}
 $f^r:\partial N^\times\to B^{d\times r}-\diag_r$ extends to $N^\times$.
\end{proof}

Note that the `if' part of Theorems \ref{t:hawe} and \ref{l:ldm}
is trivial when $\partial N_i=\emptyset$ for some $i$.

A {\bf complex} is a collection of closed faces (=simplices) of some simplex.\footnote{This is an abbreviation of `an (abstract) finite simplicial complex'.
In combinatorial terms, a complex is a collection of subsets of a finite set such that
if a subset $A$ is in the collection, then each subset of $A$ is in the collection,
A close but different notion widely studied in combinatorics is {\it hypergraph}.}
A {\it $k$-complex} is a complex containing at most $k$-dimensional simplices.
The {\it body} (or geometric realization) $|K|$ of a complex $K$ is the union of simplices of $K$.
Thus continuous
maps $|K|\to\R^d$ and
$|K|\to S^m$ are defined.
Below we abbreviate $|K|$ to $K$; no confusion should arise.

A map $f\colon K\to\R^d$ of a complex is an {\bf almost $r$-embedding} if
$f\sigma_1\cap\ldots\cap f\sigma_r=\emptyset$
whenever $\sigma_1,\ldots,\sigma_r$ are pairwise disjoint simplices of $K$.
Motivations for study this notion are presented in Remark \ref{r:motiv}.c and in \cite[\S1]{Sk16}.

Denote by $\Sigma_r$ the permutation group of $r$ elements.
The group $\Sigma_r$ acts on $\R^{d\times r}$ by permuting the columns.
For a complex $K$ let
$$K^{\times r}_{\Delta}:= \bigcup \{ \sigma_1 \times \cdots \times \sigma_r
\ : \sigma_i \textrm{ a simplex of }K,\ \sigma_i \cap \sigma_j = \emptyset \mbox{ for every $i \neq j$} \}.$$
The group $\Sigma_r$ acts the set $K^{\times r}_{\Delta}$ by permuting the points in every $r$-tuple
$(p_1,\ldots, p_r)$.

\begin{Theorem}[Metastable Mabillard-Wagner Theorem]\label{t:mmw}
Assume that $rd\ge(r+1)k+3$ and $K$ is a $k$-complex.
There exists an almost $r$-embedding $f:K\to\R^d$ if and only if there exists a $\Sigma_r$-equivariant map
$K^{\times r}_{\Delta}\to \R^{d\times r}-\diag\phantom{}_r$.
\end{Theorem}


The `only if' part of Theorem \ref{t:mmw} is clear: take $f^r$ as equivariant map.
The `if' part is implied by Theorem \ref{t:mmwi} below (for $E_0=\emptyset$
and $E_1=K^{\times r}_{\Delta}$).
Theorem \ref{t:mmwi} (and thus Theorem \ref{t:mmw}) is a `globalization' of the two local modifications:  Metastable Local Disjunction Theorem \ref{l:ldm} and the Metastable Local Realization Theorem \ref{l:ldmr}.
The latter is an analogue of `van Kampen finger moves' \cite[figures 4.2.V and 8.3, \S8]{Sk06},
and is `the second part' of the generalized Whitney trick.

For (conjectural) improvements of our results see \aronly{Remark \ref{r:conj}.}\jonly{\cite[Remark 4.3]{Sk17}.}

\begin{Remark}\label{r:sphere} (a) Theorem \ref{t:mmw} and the unpublished result \cite[Theorem 4]{MW16} of \cite{FV21} imply \cite[Corollary 5]{MW16}, \cite[Theorem 1.6]{FV21}: {\it for fixed $k,d,r$ such that $rd\ge(r+1)k+3$ there exists a polynomial algorithm for checking almost $r$-embeddability of complexes in $\R^d$.}

(b) Denote by $S^{d(r-1)-1}=S^{d(r-1)-1}_{\Sigma_r}$ the set formed by all those of real $d\times r$-matrices, for which the sum in each row zero, and the sum of squares of the matrix elements is 1.
This set is homeomorphic to the sphere of dimension $d(r-1)-1$.

For $x_1,\ldots,x_r\in\R^d$ which are not all equal define
$$
S:=x_1+\ldots+x_r,\quad \pii':=\left(x_1-\frac Sr,\ldots,x_r-\frac Sr\right)
\quad\text{and}\quad \pii:=\frac{\pii'}{|\pii'|}.
$$
This defines a map $\pii=\pii_r:\R^{d\times r}-\diag\phantom{}_r\to S^{d(r-1)-1}_{\Sigma_r}.$
Recall that this map is a $\Sigma_r$-equivariant homotopy equivalence.
Analogously $B^{d\times r}-\diag_r$ is
$\Sigma_r$-equivariantly homotopy equivalent to $S^{d(r-1)-1}_{\Sigma_r}$.
\end{Remark}

\begin{Remark}[our methods and their novelty]\label{r:imm} (a)
Passage from the case $(r-1)d=n$ of the papers cited below in (a) to the case $rd\ge n+n_i+3$ considered in this paper is non-trivial because here general position $r$-tuple intersections are no longer isolated points.
This makes surgery of intersection more complicated, cf. (b).
More importantly, this brings in `extendability of $f^r$' obstruction, which is harder to work with than the
`sum of the signs of the global $r$-fold points' integer obstruction.
Cf. \cite[Remark 3.1.b]{AMSW}.

More precisely, proofs in this paper

$\bullet$ generalize proofs from \cite[\S2.1]{AMSW}, \cite[\S3.5]{Sk16},
\cite[proof of Theorem 1.1 in pp. 7-8]{Me17}, \cite{Me18} for $r=3$;

$\bullet$ do not generalize those proofs for $r\ge4$, and do not generalize the proof from \cite{MW15}
for any $r$ (but still are closely related to all those proofs).

(b) The main achievement is invention of those formulations of Theorem \ref{l:ldm} and Proposition \ref{l:ldmin} that allow proofs by induction.
This required highly-connectedness and stable parallelizability in Theorem \ref{l:ldm}, and a subtle transversality property in Proposition \ref{l:ldmin} (i.e., in the definition of a frimmersion in \S\ref{s:meta}).
For Theorem \ref{t:mmw} we also need a realization theorem (Theorem \ref{l:ldmr}).

We use induction on $r$.
The inductive step is, roughly speaking, passing from $fN_1,\ldots,fN_r$ in $B^d$ to $fN_1,\ldots,fN_{r-2},fN_{r-1}\cap fN_r$ in $B^d$.
We need  $fN_{r-1}\cap fN_r$ to be a manifold.
For this we use differential topology.
More precisely, we use Smale-Hirsch immersion theory, in particular, density principle.
This allows to make $f$ a framed immersion.
Thus instead of Theorem \ref{l:ldm} we prove by induction its version (Proposition \ref{l:ldmin}) for 
frimmersions, i.e., for framed immersions with certain subtle transversality property.
The latter is required because the self-transversality of $fN_1,\ldots,fN_r$ in $B^d$ is lost by passing to $fN_1,\ldots,fN_{r-2},fN_{r-1}\cap fN_r$. 
Indeed, let us present an example for $r=2$. 
Take the natural map of the circle to the 8-figure in the plane. 
Multiply this map with the interval to obtain a self-transverse map $N_1=S^1\times[0,1]\to\R^3$. 
Take a map $N_2=D^2\to\R^3$ `orthogonal' to the interval.
Then $fN_1\cap fN_2$ is the 8-figure, which is not self-transversal in $\R^3$. 
 
In order to check that the extendability assumption is preserved by passing to $fN_1,\ldots,fN_{r-2},$ $fN_{r-1}\cap fN_r$, we use Pontryagin-Thom correspondence between homotopy classes and framed bordism classes.
For this we need high enough connectedness of $N_1,\ldots,N_r$.
To make $fN_{r-1}\cap fN_r$ high enough connected, we use the Surgery of Intersection Lemma \ref{l:surg} for frimmersions.


(c) Theorems \ref{l:ldm} and \ref{l:ldmr} generalize a relaxed version of \cite[Lemma 10]{MW16}\aronly{, cf. Remark \ref{r:conj}.e}.
Theorem \ref{t:mmw} is \cite[Theorem 2]{MW16}.
In \aronly{\S\ref{s:apmawa} (cf. \cite{Sk17o})}\jonly{\cite[\S6]{Sk17}} it is shown that the paper \cite{MW16} does not provide a reliable proof of  \cite[Lemma 10 and Theorem 2]{MW16}.
The revision of \cite{MW16} planned by U. Wagner in his response to the criticism (see
\aronly{Remark \ref{r:uw}}\jonly{\cite[Remark 6.2]{Sk17}}) is not publicly available five years after that plan.
{\it In spite of all that I call Theorem \ref{t:mmw} Metastable Mabillard-Wagner Theorem, in order to concentrate on mathematics and on different reliability standards in current mathematical research \cite{Sk21d}, not on priority questions.}

Proofs in this paper and in \cite{MW16} are similar because they use and extend known methods; the new parts of proofs are essential and are different.

For Theorems \ref{l:ldm} and \ref{l:ldmr} we use the Surgery of Intersection Lemma \ref{l:surg}, see the references after its statement.\footnote{These references extend the surgery of a manifold \cite{Mi61} to a surgery of double intersection of a map from a manifold used in \cite[\S4.1]{MW16}.
In \cite[\S4.1]{MW16} this surgery of double intersection was used with a reference to the `predecessor' paper \cite{Mi61}, but without the above references; an interesting but not complete \aronly{(\S\ref{s:apmawa})}\jonly{\cite[\S6]{Sk17}} attempt to prove a PL analogue was made.
Lack of the above references in \cite{MW16} may result in exaggerating the overlap of methods of \cite{MW16} and of this paper.}
For Theorem \ref{t:mmwi} we use `engulfing to a ball' (Lemma \ref{l:ball}) together with `reduction to Local Disjunction and Realization Theorems' generalizing \cite[\S5]{We67}.\aronly{\footnote{\label{f:disj} For generalizations in other directions see \cite[Proposition 2.3]{Sk00}, \cite[the Disjunction Theorem 3.1]{Sk02}; for a survey exposition see \cite[\S8.4]{Sk06}.}}

In this paper the proofs of Theorems \ref{l:ldm} and \ref{l:ldmr} are shorter (than unreliable proof of \cite[Lemma 10]{MW16}) because they are deduced by approximation from their analogues for smooth framed immersions, see Propositions \ref{l:ldmin} and \ref{l:ldmrin} below.
This allows to use vector bundles (instead of block bundles as in \cite{MW16}) and Pontryagin-Thom correspondence.
The `desuspension' part of the argument is simpler than in \cite{MW16} because the inductive step is, roughly speaking, passing from $fN_1,\ldots,fN_r$ in $B^d$ to $fN_1,\ldots,fN_{r-2},fN_{r-1}\cap fN_r$ in $B^d$ (not to $fN_1\cap fN_r,\ldots,fN_{r-1}\cap fN_r$ in $fN_r$ as in \cite{MW16}).
Proof of Theorem \ref{t:mmw} is shorter than \cite[\S3]{MW16} because I state Theorem \ref{t:mmwi} as a simple
way to organize the induction, cf. \cite[Disjunction Theorem 3.1]{Sk02}.
\end{Remark}

\section{Proof of the Metastable Local Disjunction Theorem}\label{s:meta}

\begin{Remark}\label{r:ifhawe}
The `if' part of Theorem \ref{t:hawe} for a PL map $f$ is \cite[Theorem 1.3]{Sk00} for $N=\emptyset$ and manifolds $Q=N_1$, $P=N_2$.
For $m>p+q$ ($d>n_1+n_2$) that theorem holds by general position, while for $m\le p+q$ ($d\le n_1+n_2$) we have
$m\ge q+(p+q-m)+3\ge q+3$.
Thus we need the easier case $0\le m-3\ge p,q$ of that theorem essentially proved in \cite{We67}.
For this case, the assumption $p\le q$ ($n_2\le n_1$) is not used in that theorem.
(Instead of using \cite[Theorem 1.3]{Sk00} for $p>q$ we can have the assumptions $n_2\le n_1$ and
$n_r\le\ldots\le n_1$ in Theorems \ref{t:hawe}, \ref{l:ldmr2} and Propositions \ref{l:ldmin}, \ref{l:ldmrin}, respectively.)
The `if' part of Theorem \ref{t:hawe} for the general case is obtained from the PL case by taking a small proper homotopy from $f$ to a PL map, analogously to the proof of Theorem \ref{l:ldm} in \S\ref{s:meta}.

\aronly{By \cite[Theorem 1.3]{Sk00} for $N=\emptyset$ and manifolds $Q=N_1$, $P=N_2$, in Theorem \ref{t:hawe} we can replace `$2d\ge2n_2+n_1+3$ ... $\overline f=f$ on $\partial N_1\sqcup N_2$' to `$2d\ge2n_2+n_1+2$ and $n_2\le n_1\le d-2\ge1$ ... $\overline f=f$ on $\partial N_1\sqcup\partial N_2$'.
Cf. \cite[2nd paragraph in p. 3]{Sk02}.

Alternatively, Theorem \ref{t:hawe} can perhaps be proved by induction on $d$, the inductive step reducing the statement for $(n_1,n_2,d)$ to the statement for $(n_1,n_2-1,d-1)$, cf. \cite[\S5]{RS72}.}
\end{Remark}

For a map $g:X\to B^d$ the {\bf self-intersection set} of $g$ is
$$S(g):=\{x\in X\ :\ |g^{-1}gx|>1\}.$$
{\it We shorten `smooth immersion' to `immersion'.}
An immersion is {\bf framed} if it admits a full rank collection of normal vector fields.
An {\bf $s$-frimmersion} is a proper framed immersion $f:N_1\sqcup\ldots\sqcup N_r\to B^d$ of disjoint union of   manifolds such that


$\bullet$ $f|_{N_1},\ldots,f|_{N_r}$ are transverse to each other,

$\bullet$ $f|_{N_i}$ is self-transverse for every $i=1,\ldots,r-1$, and

$\bullet$ $S(f|_{N_r})$ is the image of a self-transverse immersion to $N_r$ of a finite disjoint union of manifolds of dimensions at most $s$.

\begin{Proposition}\label{l:ldmin}
Let $f:N\to B^d$ be an $(n_1,\ldots,n_r)$-Whitney
$(n_r-w-3)$-frimmersion
such that

(\ref{l:ldmin}.1) $f\partial N_1\cap\ldots\cap f\partial N_r=\emptyset$,

(\ref{l:ldmin}.2) the map $f^r:\partial N^\times\to B^{d\times r}-\diag_r$ extends to $N^\times$, and

(\ref{l:ldmin}.3) if $r\ge3$, then every $N_i$ is $(w+1)$-connected.

Then there exists a map
$\overline f:N\to B^d$ such that
$$\overline f=f \quad\text{on}\quad N_r\cup\partial N \quad\text{and}\quad
\overline fN_1\cap\ldots\cap \overline fN_r=\emptyset.$$
\end{Proposition}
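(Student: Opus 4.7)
The plan is to prove the proposition by induction on $r$ using a surgery-of-intersection argument, translating the homotopy-theoretic hypothesis (\ref{l:ldmin}.2) into a geometric null-cobordism statement via the Pontryagin–Thom correspondence. The base case $r=2$ is a version of the classical Whitney trick adapted to frimmersions (the self-intersections of $f|_{N_2}$ being permitted by (\ref{l:ldmin}.4)). For $r\ge 3$ I carry out the reduction below.

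\emph{Step 1 (geometric setup).} By (\ref{l:ldmin}.4) and the transversality built into the notion of frimmersion, $X:=fN_1\cap\cdots\cap fN_r$ is a closed smooth $w$-manifold in $\Int B^d$. Its preimages $X_i:=(f|_{N_i})^{-1}(X)\subset\Int N_i$ are closed $w$-submanifolds, and each $X_i$ inherits a canonical framing in $N_i$ coming from the normal framings of $f|_{N_j}$, $j\ne i$, along $X$. In particular $X_r\subset N_r$ becomes a framed $w$-manifold.

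\emph{Step 2 (Pontryagin--Thom translation).} Using Remark \ref{r:sphere}, the extension of $f^r$ over $N^\times$ hypothesised in (\ref{l:ldmin}.2) is equivalent to nullhomotoping $\pii_r\circ f^r\colon\partial N^\times\to S^{d(r-1)-1}_{\Sigma_r}$. Applying Pontryagin--Thom to $f^r$ after restricting the target to a tubular neighbourhood of $\diag_r$, this is equivalent to the framed $w$-manifold $(f^r)^{-1}(\diag_r)\subset N^\times$ admitting a framed null-cobordism in $N^\times$ rel $\partial N^\times$. Projecting the fibre product onto the $r$-th factor, this yields in particular a framed null-cobordism of $X_r\subset N_r$.

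\emph{Step 3 (geometric realisation).} Using the $(w+1)$-connectivity of each $N_i$ (for $i<r$) together with the codimension estimate $d-n_i\ge w+3$, I would realise the abstract framed null-cobordism concretely as a compact embedded $(w+1)$-manifold $W\subset\Int N_r$ with $\partial W=X_r$, equipped with the accompanying data (maps/framings certifying how the other sheets of $fN_1,\ldots,fN_{r-1}$ meet a neighbourhood of $W$ in $B^d$). The connectivity hypotheses kill obstructions to extending the companion maps over $W$, and the codimension estimate provides the general-position room to (i) embed $W$, and (ii) render the companion sheets transverse along $W$ so the output of the surgery will again be a frimmersion.

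\emph{Step 4 (surgery).} Apply the Surgery of Intersection Lemma \ref{l:surg} to the Whitney-type $(w+1)$-manifold $W$ and its companion data. The lemma cancels the $r$-fold intersection $X$ by modifying $f|_{N_1},\ldots,f|_{N_{r-1}}$ inside a small neighbourhood of $W\subset N_r$, leaving $f|_{N_r}$ and $\partial N$ fixed and preserving the frimmersion property; the resulting map is the desired $\overline f$.

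The main obstacle is Step 3: the bookkeeping of the framings assembled from the various normal bundles of the companion sheets, so that the \emph{abstractly} existing framed null-cobordism delivered by Pontryagin--Thom can be realised \emph{geometrically} by an embedded $W\subset N_r$ carrying the correct extension data on all $r-1$ companions simultaneously. The hypotheses $(*)$ and $(w+1)$-connectivity are tuned precisely so that the obstructions (in homotopy groups of $N_i$, in Stiefel bundles of framings, and in general-position dimension counts of the induced companion maps) all vanish; translating this cleanly from Pontryagin--Thom data into an embedded Whitney-type $(w+1)$-manifold with matching companion tubes is the technical crux, and is what forces the stable parallelisability hypothesis.
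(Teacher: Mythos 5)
Your overall strategy --- translate (\ref{l:ldmin}.2) into a framed null-cobordism via Pontryagin--Thom, realise it geometrically inside $N_r$, and surger the intersection away along a membrane --- is not the route the paper takes, and it has a genuine gap exactly at the point you flag as the ``technical crux''. The problem starts in Step 2: the framed null-cobordism supplied by Pontryagin--Thom lives in $N^\times\times I$ (it bounds $(f^r)^{-1}\diag_r$, a framed $w$-submanifold of the $n$-manifold $N^\times$, with framing of codimension $(r-1)d$), and there is no ``projection onto the $r$-th factor'' that converts it into a framed null-cobordism of $X_r$ inside $N_r$. The codimensions do not match ($(r-1)d$ versus $n_r-w$), the projection of the cobordism to $N_r\times I$ is merely a map that need not carry any induced framing, and --- more fundamentally --- the obstruction to removing the $r$-fold intersection genuinely lives in $N^\times$ and cannot be localised in a single factor: a membrane $W\subset N_r$ with $\partial W=X_r$ exists for soft reasons once $N_r$ is $(w+1)$-connected, and what is actually needed is control over how $fW$ meets $fN_1\cap\cdots\cap fN_{r-1}$, which is precisely the information your projection discards. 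Relatedly, Step 4 misreads Lemma \ref{l:surg}: that lemma does not cancel intersections along a membrane; it performs surgery so that the double-point manifold $M(f)$ of two of the sheets becomes $k$-connected, while fixing the maps on $Q$ and on $\partial P$. It is a preparatory, not a disjunction, step.

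The paper avoids the one-shot membrane construction entirely by inducting on $r$ inside $B^d$: it first applies Lemma \ref{l:surg} to $P=N_{r-1}$, $Q=N_r$ so that the resolution $M=M(f|_{N_{r-1}\sqcup N_r})$ of $fN_{r-1}\cap fN_r$ is sufficiently connected, then replaces the $r$ sheets $N_1,\ldots,N_r$ by the $r-1$ sheets $N_1,\ldots,N_{r-2},M$ (with $M$ mapped by $f\pr_{N_r}$), and verifies that hypotheses (\ref{l:ldmin}.1)--(\ref{l:ldmin}.4) descend to this new configuration. The only Pontryagin--Thom argument occurs in checking that (\ref{l:ldmin}.2) descends, and there the framed cobordism is pushed from $N^\times\times I$ into $N_1\times\cdots\times N_{r-2}\times M\times I$ using the $(w+1)$-connectivity of the pair $(N^\times,N^\times_-)$ and a general-position dimension count, together with a stability argument for the framing --- it is never projected onto a single factor. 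This reduction from $r$ sheets to $r-1$ sheets in the same ball is the idea missing from your outline; without it, Step 3 cannot be completed.
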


\begin{Lemma}\label{l:hprin}
Assume that $V$ is a manifold, $f,h:V\to B^d$ are proper maps,
$h$ is an immersion, and $\dim V<d$.
Then for each $\varepsilon>0$ there is a proper homotopy $F:V\times I\to B^d$
between $f$ and an immersion regular homotopic to $g$ such that
$\diam F(v\times I)<\varepsilon$ for every $v\in V$.
\end{Lemma}

This is the well-known Smale-Hirsch density principle \cite[Theorems 5.7 and 5.10]{Hi59}, \cite[\S1.2.2.A]{Gr86}
(because every two proper maps $N\to B^d$ are properly homotopic;
for an application in a similar situation see e.g. \cite[the paragraph before (4.1)]{Ko88}).
The definition of regular homotopy can be found e.g. in \cite[Definition 2.7]{Ko13}; we only need that being framed is preserved under regular homotopy.

Denote by $\pr_X$ the projection of a Cartesian product to the factor $X$.

\begin{proof}[Proof of the `if' part of Theorem \ref{l:ldm} assuming Proposition \ref{l:ldmin}]
Since every $N_i$ is stably parallelizable, by the Smale-Hirsch immersion theorem there is a proper framed immersion $g:N\to B^d$.
Then by Lemma \ref{l:hprin} there is a small homotopy $F:N\times I\to B^d$
from $F_0:=f$ to a proper immersion $F_1:N\to B^d$ regular homotopic to $g$.
Then $F_1$ is also framed.
By general position we may assume that $F_1$ is self-transverse.

Since the homotopy $F$ is small, we have $F_t^r(\partial N^\times)\cap\diag_r=\emptyset$ for each $t$.
Since $F_0^r=f^r:\partial N^\times\to B^{d\times r}-\diag_r$ extends to $N^\times$, by the Borsuk Homotopy Extension Theorem (see footnote \ref{f:bhet}) $F_1^r$ extends to $N^\times$.
Since $F_1$ is self-transverse, $S(F_1|_{N_r})$ is the image of a self-transverse immersion to $N_r$ of a finite disjoint union of manifolds
of dimensions at most $2n_r-d\le n_r-w-3$, which inequality follows from $d\ge n_r+w+3$.
Then applying Proposition \ref{l:ldmin} to $F_1$ we obtain a map $\overline{F_1}:N\to B^d$.
Then the required extension $\overline f$ is defined as `the union' of $F|_{\partial N\times I}\times\pr_I$ and $\overline{F_1}$:
$$\xymatrix{N  \ar[r]_(0.3){\cong}^(0.3){h_N} \ar[d]^{\overline f} &
\partial N\times I \bigcup\limits_{\partial N\times0=\partial N} N
\ar[d]^{F|_{\partial N\times I}\times\pr_I\cup \overline{F_1}}\\
B^d \ar[r]_(0.3){\cong}^(0.3){h_{B^d}}  & S^{d-1}\times I\bigcup\limits_{S^{d-1}\times0=\partial B^d} B^d }.$$
Here $h_N,h_{B^d}$ are natural homeomorphisms.\aronly{\footnote{Since $F$ is not fixed on the boundary,
$\overline f$ need not coincide with $f$ on $N_r$ (unlike in Proposition \ref{l:ldmin}).}}
\end{proof}

For a map $f:P\sqcup Q\to B^d$ denote
$$M(f):=\{(x,y)\in P\times Q\ :\ fx=fy\}.$$
(This is a `resolution' of $fP\cap fQ$.)

\begin{Lemma}[Surgery of Intersection]\label{l:surg}
Let
$P$ and $Q$ be $k$-connected $p$- and $q$-manifolds,
and $f:P\sqcup Q\to B^d$ is a
$(q-k-2)$-frimmersion.
If $d-k-2\ge p,q$ and $p+q-d\ge2k+2\ge2$,
then there is a
$(q-k-2)$-frimmersion $f_1:P\sqcup Q\to B^d$ such that $f_1=f$ on $\partial P\sqcup Q$, and
$M(f_1)$ is a $k$-connected $(p+q-d)$-manifold.
\end{Lemma}

{\it Remark.} This lemma is analogous to \cite[Theorem 4.5 and appendix A]{HK98} and \cite[Theorem 4.7 and appendix]{CRS}; cf. \cite{Ha63}, \cite[Lemma 4.2]{Ha84},
\cite[The Surgery of Intersection Lemma 2.1]{AMSW}, \cite[\S4.1]{MW16}, \cite[proof of Theorem 1.1 in p. 7]{Me17}, \cite{Me18}.
The difference with \cite[Theorem 4.5]{HK98} and \cite[Theorem 4.7]{CRS} is that $f$ is a map of a disconnected manifold.
More importantly, $f_1=f$ on $\partial P\sqcup Q$, not only on $\partial P\sqcup \partial Q$.
Hence $f|_Q$ cannot be shifted to general position, so we need to assume that $\dim S(f|_Q)$ is small enough, and we assume that $f$ is a $(q-k-2)$-frimmersion.
Less importantly, we replace the conclusion on the classifying map with the simpler equivalent condition on $M(f_1)$.
See detailed description of references in \cite[Remark 2.3.a]{AMSW}, and detailed proof in \S\ref{s:apprsuin}.



\smallskip
{\it Proof of Proposition \ref{l:ldmin}: organization.}
We may assume that $w\ge0$, otherwise Proposition \ref{l:ldmin} holds by general position.
(A reader may first read the proof for $r=3$ and $w=0$.)

The proof is by induction on $r$.
We have $d\ge n_i+w+3\ge n_i+3$.
So the base $r=2$ follows from Theorem \ref{t:hawe};
we do not need $f$ to be an $(n_2-w-3)$-frimmersion.
Let us prove the inductive step $r-1\to r\ge3$.

\smallskip
{\it Proof of Proposition \ref{l:ldmin}: surgery.}
We have
$$2w+3=(w+n_{r-2}+3)+(w-n_{r-2})\le d+n_r+n_{r-1}+(r-3)d-(r-1)d=n_r+n_{r-1}-d,$$
where the inequality holds because $n_i<d$, and so is strict for $r\ge4$.
Hence we can apply the Surgery of Intersection Lemma \ref{l:surg} to $P=N_{r-1}$, $Q=N_r$, $k=w+1$ for $r=4$, and $k=w$ for $r=3$.
We obtain a map $f_1:N\to B^d$ satisfying the hypothesis of Proposition \ref{l:ldmin}, for which $f=f_1$ on
$N-\Int N_{r-1}$, and $M(f_1|_{N_{r-1}\sqcup N_r})$ is $(w+1)$-connected for $r\ge4$ and is $w$-connected for $r=3$.

Take a homotopy $f_t$ between $f=f_0$ and $f_1$.
Then $f_t^r$ is a homotopy between maps $f^r,f_1^r:\partial N^\times\to B^{d\times r}-\diag_r$.
So by the Borsuk Homotopy Extension Theorem (see footnote \ref{f:bhet}) the map
$f_1^r:\partial N^\times\to B^{d\times r}-\diag_r$ extends to $N^\times$.

Thus we can rename $f:=f_1$ and assume additionally that $M:=M(f|_{N_{r-1}\sqcup N_r})$ is $(w+1)$-connected for $r\ge4$ and is $w$-connected for $r=3$.

\smallskip
{\it Proof of Proposition \ref{l:ldmin}: construction of $g$ and checking some its properties.}
Denote
$$N_-:=N_1\sqcup\ldots\sqcup N_{r-2}.$$
Define $g:N_-\sqcup M\to B^d$ to be $f$ on $N_-$, and $f\pr_{N_r}$ on $M$.
If $M=\emptyset$, then the inductive step is clear.
So assume that $M\ne\emptyset$.

Let us check that $g$ is an $(n_1,\ldots,n_{r-2},n_r+n_{r-1}-d)$-Whitney map.
By transversality $\dim M=n_r+n_{r-1}-d$.
Hence $n_1+\ldots +n_{r-2}+\dim M-(r-2)d=w$.
We have $d\ge n_r+w+3\ge\dim M+w+3$.

Let us check that $g$ is a $(n_r+n_{r-1}-d-w-3)$-frimmersion.
Clearly, $g|_M$ is proper, so $g$ is proper.
Since the restrictions of $f$ to the components are framed immersions transverse to each other, we obtain that $M$ is a manifold,
and $g|_M$ is a framed immersion transverse to $f|_{N_-}$.
Thus the restrictions of $g$ to the components are transverse to each other.
The map $f|_{N_{r-1}}$ self-transverse, and $f|_{N_{r-1}}$ is transverse to $f|_{N_r}$.
This and the hypothesis on $S(f|_{N_r})$ imply that $S(g|_M)$ is the image of a self-transverse immersion of a finite disjoint union of manifolds, having dimensions at most
$$\max\{2n_{r-1}+n_r-2d,n_r-w-3+n_{r-1}-d\}=(n_r+n_{r-1}-d)-w-3,$$
where the latter equality follows from $d\ge n_r+w+3$.

The property (\ref{l:ldmin}.1) for ($(f,r)$ replaced by) $(g,r-1)$ holds because
$\partial M\subset\partial N_{r-1}\times \partial N_r$.

The property (\ref{l:ldmin}.3) for manifolds $N_1,\ldots,N_{r-2},M$ holds because $M$ is $(w+1)$-connected for $r\ge4$.


\begin{proof}[Checking the property (\ref{l:ldmin}.2) for ($(f,r)$ replaced by) $(g,r-1)$]
We need to prove that for
$$N^\times_-:=N_1\times\ldots\times N_{r-2}\times M$$
there is a map
$$\Psi:N^\times_-\to B^{d\times(r-1)}-\diag\phantom{}_{r-1}\quad\text{extending}\quad
g^{r-1}:\partial N^\times_-\to B^{d\times(r-1)}-\diag\phantom{}_{r-1}.$$
By (\ref{l:ldmin}.2) (for $(f,r)$) there is a general position homotopy
$$\Phi:N^\times\times I\to B^{d\times r}\rel\partial N^\times\quad\text{such that}\quad
\Phi_0=f^r\quad\text{and}\quad \im \Phi_1\cap \diag_r=\emptyset.$$
Identify $B^{d\times(r-1)}$ with the subset of $B^{d\times r}$ consisting of $r$-tuples of vectors
from $B^d$ for which the $(r-1)$-th and $r$-th vectors are the same.
Then as a submanifold $\delta_{r-1}$ is identified with $\delta_r$, but we use different notation for them as framed submanifolds.
The natural normal framing of $\diag_r$ in $B^{d\times r}$ is an extension of the natural normal framing of $\diag_{r-1}$ in $B^{d\times(r-1)}$
by the natural normal framing of $B^{d\times(r-1)}$ in $B^{d\times r}$.

Then $N^\times_-$ contains a framed submanifold $(f^r)^{-1}\diag_r$ of $N^\times$, which
is the framed boundary of a framed submanifold $\Phi^{-1}\diag_r$ of $N^\times$.
The framed submanifold $(g^{r-1})^{-1}\diag_{r-1}$ of $N^\times_-$ is obtained from $\partial\Phi^{-1}\diag_r=(f^r)^{-1}\diag_r$
by forgetting the part of its normal framing formed by normal framing of $N^\times_-$ in $N^\times$.
The Pontryagin-Thom construction (see e.g. \cite[\S18.5]{Pr06}) shows that for proving the existence of $\Psi$ it suffices to prove that

(*) \quad {\it $(g^{r-1})^{-1}\diag_{r-1}$ is the framed boundary of certain framed submanifold of $N^\times_-\times I$.}

\smallskip
{\it Proof of (*).}
Since $r\ge3$, by (\ref{l:ldmin}.3) (for $(f,r)$) $N^\times$ is $(w+1)$-connected.
By (\ref{l:ldmin}.3) (for $(f,r)$) and since $M$ is $w$-connected, we obtain that $N^\times_-$ is  $w$-connected.
Then by the exact sequence of pair the pair $(N^\times,N^\times_-)$ is $(w+1)$-connected.
Hence  $(N^\times\times I,N^\times_-\times I)$ is  $(w+1)$-connected.


By general position $\dim \Phi^{-1}\diag_r=w+1$.
Thus the inclusion $\Phi^{-1}\diag_r\to N^\times\times I$ is homotopic, relative to the boundary, to a map whose image is in $N^\times_-\times I$.
Since $r\ge3$, we have
$$d(2r-3)\ge dr>(r-1)d+w+1,\quad\text{so}\quad 2w+2<(r-2)d+w+1=\dim(N^\times_-\times I).$$
Thus $\Phi^{-1}\diag_r$ is isotopic, and hence framed cobordant, relative to the boundary, to a $(w+1)$-submanifold $F$ of $N^\times_-\times I$ framed in $N^\times\times I$.
In the following paragraph we check that $F$ is as required, i.e., $F$ is framed in $N^\times_-\times I$.


We have $(g^{r-1})^{-1}\diag_{r-1}=(f^r)^{-1}\diag_r=\partial F$ as framed submanifolds of $N^\times_-$.
Since $w+1<(r-2)d$,
the inclusion induces a monomorphism $\pi_j(SO_{(r-2)d})\to\pi_j(SO_{(r-1)d})$ for each $j<w+1$.
Hence the normal framing in $N^\times_-$ of $F$ decomposes by skeleta to the sum of a normal framing of $F$ in $N^\times_-\times I$
and the restriction to $F$ of the normal framing of $N^\times_-\times I$ in $N^\times\times I$.
\end{proof}



\begin{proof}[Proof of Proposition \ref{l:ldmin}: completion of the inductive step]
Thus $g$ satisfies the hypothesis of Proposition \ref{l:ldmin} for $r$ replaced by $r-1$.
Applying the inductive hypothesis to $g$ we obtain a map $\overline g:N_-\sqcup M\to B^d$ such that
$$g=\overline g\quad\text{on}\quad \partial N_-\sqcup M
\quad\text{and}\quad \overline gN_1\cap\ldots\cap \overline gN_{r-2}\cap \overline gM=\emptyset.$$
Define $\overline f:N\to B^d$ to be $\overline g$ on $N_-$ and to be $f$ on $N_{r-1}\sqcup N_r$.
Then $\overline f$ is as required.
\end{proof}

\section{The Metastable Local Realization Theorem}\label{s:metar}

Maps $\varphi,\psi:N^\times\to B^{d\times r}-\diag_r$ {\bf agree} along a homotopy
$\alpha_t:\partial N^\times\to B^{d\times r}-\diag_r$ between $\varphi|_{\partial N^\times}$ and
$\psi|_{\partial N^\times}$, if
$\alpha$ extends to a homotopy between $\varphi$ and $\psi$.

The following theorem is a (`relative link map') version of the celebrated Weber Theorem 
\cite[the Weber Theorem 8.1]{Sk06}, and is (a stronger version of) the case $r=2$ of the Metastable 
Local Realization Theorem \ref{l:ldmr} below (see also the sentence before Remark \ref{r:sphere}).

\begin{Theorem}\label{l:ldmr2} Let

$\bullet$ $f_0:N_1\sqcup N_2\to B^d$ be a proper map of a disjoint inion of $n_1$- and $n_2$-manifolds such that
$2d\ge2n_2+n_1+3$ and $f_0N_1\cap f_0N_2=\emptyset$, and

$\bullet$ $\Phi:N_1\times N_2\to B^{d\times r}-\diag_r$ a map such that
$\Phi|_{\partial(N_1\times N_2)}=f_0^2|_{\partial(N_1\times N_2)}$.

Then there is a homotopy $f_t:N\to B^d$ fixed on $\partial N_1\sqcup N_2$ such that $f_1N_1\cap f_1N_2=\emptyset$,
and the maps $f_1^2$, $\Phi$ agree along the homotopy $f_t^2$.
\end{Theorem}

{\it Comments on the proof.}
Theorem \ref{l:ldmr2} easily follows from \cite[Theorem 1.3]{Sk00} for $N=\emptyset$ and manifolds $Q=N_1$, $P=N_2$, see the details in Remark \ref{r:ifhawe}.
More precisely, we need an improvement of \cite[Theorem 1.3]{Sk00} in which `$\t{f_1}\simeq\Phi$' is replaced by
`maps $\t{f_1}$, $\Phi$ agree along the `composition'
$\Phi|_{\partial\t K}\overset{H}\simeq \t{f_0}|_{\partial\t K}\overset{\t{f_t}}\simeq \t{f_1}|_{\partial\t K}$ of given homotopy $H$ and $\t{f_t}$.'
This improvement is in fact proved in \cite[\S2]{Sk00},
cf. \cite[Realization Lemma 2.2]{Sk00} and proof of Theorem \ref{t:mmwi} in \S\ref{s:prmmw}.

\medskip
Denote by $\#$ the connected sum of maps.

\begin{Theorem}[Metastable Local Realization]\label{l:ldmr}
Let $f_0:N\to B^d$ be a $(n_1,\ldots,n_r)$-Whitney map of a stably parallelizable manifold such that $f_0N_1\cap\ldots\cap f_0N_r=\emptyset$, and $z:S^n\to B^{d\times r}-\diag_r$ a map.
Then there is a homotopy $f_t:N\to B^d$ fixed on $\partial N$ such that $f_1N_1\cap\ldots\cap f_1N_r=\emptyset$, and

(\ref{l:ldmr}.*) the maps $f_1^r$, $f_0^r\#z$ agree along the homotopy $f_t^r$.
\end{Theorem}


A {\bf pattern} is a proper framed immersion $f:N_1\sqcup\ldots\sqcup N_r\to B^d$ of a disjoint union of manifolds such that $fN_1\cap\ldots\cap fN_r=\emptyset$ and the restrictions to the components are transverse to each other.
Theorem \ref{l:ldmr} is deduced from its following version in which we assume that $f_0$ is a pattern instead of assuming that $N_i$ are stably parallelizable, and require the homotopy to be fixed outside $\Int N_1$, not only on $\partial N$.

\begin{Proposition}\label{l:ldmrin}
Let $f_0:N\to B^d$ be a $(n_1,\ldots,n_r)$-Whitney map and a pattern, and $z:S^n\to B^{d\times r}-\diag_r$ a map.
Then there is a homotopy $f_t:N\to B^d$ fixed outside $\Int N_1$ such that
$f_1N_1\cap\ldots\cap f_1N_r=\emptyset$, and (\ref{l:ldmr}.*) holds.
\end{Proposition}


\begin{proof}[Proof of Theorem \ref{l:ldmr} assuming Proposition \ref{l:ldmrin}]
Analogously to the first paragraph of the proof of the `if' part of Theorem \ref{l:ldm} (\S\ref{s:meta})
we obtain a small homotopy $F:N\times I\to B^d$ from $F_0=f_0$ to a proper framed self-transverse immersion $F_1$.
Since the homotopy $F$ is small, for every $t\in[0,1]$ we have $F_tN_1\cap\ldots\cap F_tN_r=\emptyset$, so
$F_t$ is a pattern and $F_t^r(N^\times)\cap\diag_r=\emptyset$.
Hence applying Proposition \ref{l:ldmrin} to $f_0=F_1$ we obtain a homotopy $F:N\times[1,2]\to B^d$
fixed outside $\Int N_1$ such that

(i) $F_2N_1\cap\ldots\cap F_2N_r=\emptyset$, and

(ii) the maps $F_2^r$, $F_1^r\#z$ agree along the homotopy $F_t^r$, $t\in[1,2]$.

Take a homeomorphism $h: \partial N\times I \bigcup\limits_{\partial N\times0=\partial N} N \to N$
(cf. end of the proof of the `if' part of Theorem \ref{l:ldm}).
Since the homotopy $F:N\times[1,2]\to B^d$ is fixed on $\partial N$, we have
$F_t\partial N_1\cap\ldots\cap F_t\partial N_r=F_1\partial N_1\cap\ldots\cap F_1\partial N_r=\emptyset$ for $t\in[1,2]$.
Hence we may assume that the collar $h(\partial N\times I)$ is so small that

(iii) $F_th(\partial N_1\times I)\cap\ldots\cap F_th(\partial N_r\times I)=\emptyset$ for every $t\in[1,2]$.

Define the required homotopy
$$f:N\times[0,1]\to B^d\quad\text{by}\quad
f(x,t)=\begin{cases} F_{2t}(x) & x\in h(N) \\
F_{4(1-s)}h(y,1) & x=h(y,s)\in h(\partial N\times I),\ s\ge1-\frac t2 \\
F_{2t}h\left(y,\dfrac s{1-\frac t2}\right)& x=h(y,s)\in h(\partial N\times I),\ s\le1-\frac t2
\end{cases}.$$
The meaning of this formula on $h(\partial N\times I)$ is that $f(h(y,s),t)$ for $s\le1-\frac t2$
changes from $F_{2t}h(y,0)$ to $F_{2t}h(y,1)$, and then for $s\le1-\frac t2$ changes to $F_0h(y,1)$.

Clearly, $f_0=F_0$ indeed coincides with the given map $f_0$.

Since $f(h(y,1),t)=F_0(h(y,1))$, the homotopy $f_t$ is fixed on $\partial N$.

The map $f_1$ is the `union' of $F_2$, $F_2|_{h(\partial N\times I)}$ and
$F|_{h(\partial N\times I)\times[0,2]}$.
This, (i), and (iii) imply that $f_1N_1\cap\ldots\cap f_1N_r=\emptyset$.

By (ii), the maps $(f_1h^{-1})^r$, $(f_{1/2}h^{-1})^r\#z$ agree along the homotopy
$(f_th^{-1})^r$, $t\in[1/2,1]$.
Also $f_t^r(x)\not\in\delta_r$ for every $(x,t)\in N^\times\times[0,1]-h^r(\Int N^\times)\times[1/2,1]$.
Hence (\ref{l:ldmr}.*) holds.
\end{proof}

\begin{proof}[Proof of Proposition \ref{l:ldmrin}]
By Remark \ref{r:sphere}.b there is a homotopy equivalence $B^{d\times r}-\diag_r\to S^{d(r-1)-1}_{\Sigma_r}$.
Proposition \ref{l:ldmrin} does not involve the action of $\Sigma_r$ on $S^{d(r-1)-1}_{\Sigma_r}$.
So it suffices to prove Proposition \ref{l:ldmrin} for $B^{d\times r}-\diag_r$ replaced by $S^{d(r-1)-1}$.
Below we refer to Proposition \ref{l:ldmrin} with such a replacement.

We have $\dim(\partial N^\times\times I)=n$.
Thus for $n<d(r-1)-1$ Proposition \ref{l:ldmrin} holds for the identical homotopy $f_t$ by general position.
So assume that $n\ge(r-1)d-1$.
Then $d=rd-(r-1)d\ge n+n_i+3-n-1=n_i+2$.

The proof is by induction on $r$.\aronly{\footnote{Instead of the induction, we can reduce the case of arbitrary $r$ to the case $r=2$ by setting $M:=\{(x_2,\ldots,x_r)\in N_2\times\ldots\times N_r\ :\ fx_2=\ldots=fx_r\}$, and
defining $g:N_1\sqcup M\to B^d$ to be $f$ on $N_1$ and $f\circ\pr_{N_2}$ on $M$.
Then we apply $r-2$ times the analogues of the argument from `Proof of (\ref{l:ldmr}.*)'
below.}}
The base $r=2$ follows from Theorem \ref{l:ldmr2}; we do not need $f$ to be a pattern.
Let us prove the inductive step $r-1\to r\ge3$.

As in the proof of Proposition \ref{l:ldmin}, define
$$N_-:=N_1\sqcup\ldots\sqcup N_{r-2},\quad
M=M(f_0|_{N_{r-1}\sqcup N_r})=\{(x,y)\in N_{r-1}\times N_r\ :\ f_0x=f_0y\},$$
$$N^\times_-:=N_1\times\ldots\times N_{r-2}\times M,$$
define $g:N_-\sqcup M\to B^d$ to be $f$ on $N_-$, and $f\pr_{N_r}$ on $M$,
and prove that $g$ is a $(n_1,\ldots,n_{r-2},n_{r-1}+n_r-d)$-Whitney map and a is pattern.


We have
$$d(2r-3)\ge dr>n+3 \quad\Rightarrow\quad 2d(r-2)-3\ge n-d.$$
Then the suspension $\Sigma^d:\pi_{n-d}(S^{d(r-2)-1})\to \pi_n(S^{d(r-1)-1})$ is an epimorphism.
So by the Freudenthal Suspension Theorem there is a map $u:S^{n-d}\to S^{d(r-2)-1}$ such that $\Sigma^d[u]=[z]$.

Apply the inductive hypothesis to the pattern $g_0=g$ and the map $u$.
We obtain a homotopy $g_t:N_-\sqcup M\to B^d$ fixed outside $\Int N_1$ such that
$g_1N_1\cap\ldots\cap g_1N_r=\emptyset$ and

(**) the maps $g_1^{r-1}$, $g_0^{r-1}\#u$ of $N^\times_-$ agree along the homotopy $g_t^{r-1}$ on $\partial N^\times_-$.

Let $f_t:N\to B^d$ be $g_t$ on $N_-$ and $f_0$ on $N_{r-1}\sqcup N_r$.
Clearly, $f_1N_1\cap\ldots\cap f_1N_r=\emptyset$.

\begin{figure}[h]
\centerline{\includegraphics
{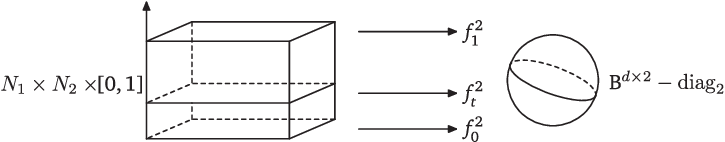}}
\caption{The map $F$ for $r=2$; $\mbox{diag}_2=\diag_2$.}
\label{f-hff}
\end{figure}

\smallskip
{\it Proof of (\ref{l:ldmr}.*).}
Define the map (see Figure \ref{f-hff})\aronly{\footnote{Since $f_0N_1\cap\ldots\cap f_0N_r=\emptyset$, we have $f_0^rN^\times\cap\diag_r=\emptyset$.
Analogously $f_1^rN^\times\cap\diag_r=\emptyset$.
The map $F$ is `the difference between $f_1^r$ and $f_0^r$ on $N^\times$ via homotopy $f_t^r$ on
$\partial N^\times$'.}}
$$F:\partial(N^\times\times[0,1])\to S^{d(r-1)-1}\quad\text{by}\quad F(x,t)=f_t^r(x).$$
Denote by $c$ the constant map (between any spaces).
Then (\ref{l:ldmr}.*) is equivalent to $F\#z$ being homotopic to $c$, and hence to $F$ being homotopic to $c\#(-z)$.
Here by $-z$ we denote any representative of the inverse of the homotopy class of $z$;
analogous meaning has $-u$ below.
Analogously define the map
$$G:\partial(N^\times_-\times[0,1])\to S^{d(r-2)-1}\quad\text{by}\quad G(x,t)=g_t^r(x).$$
Then (**) is equivalent to $G$ being homotopic to $c\#(-u)$.

For each manifold $X$ and integer $m$ identify by the Pontryagin-Thom 1--1 correspondence \cite[\S18.5]{Pr06}
the set $[X;S^m]$ with the set of codimension $m$ framed submanifold of $X$ up to framed cobordism.
Identify $B^{d\times(r-1)}$ with the subset of $B^{d\times r}$ consisting of $r$-tuples of vectors
from $B^d$ for which the $(r-1)$-th and $r$-th vectors are the same.
Then $S^{d(r-2)-1}$ is identified with certain equatorial sphere in $S^{d(r-1)-1}$.
Shift the homotopy $g_t$ to general position modulo $g_0$ and $g_1$.

Take a regular value $a\in S^{d(r-2)-1}$ of $G$.
Since the intersection $fN_{r-1}\cap fN_r$ is transversal, $a$ is a regular value of $F$.
The natural normal framing of $a$ in $S^{d(r-1)-1}$ is the extension of the natural normal framing of $a$
in $S^{d(r-2)-1}$ by the natural normal framing of $S^{d(r-2)-1}$ in $S^{d(r-1)-1}$.
The framed preimages $F^{-1}a$ and $G^{-1}a$ are framed submanifolds of $\partial(N^\times\times[0,1])$ and of $\partial(N^\times_-\times[0,1])$, respectively.
We have $f_t^r|_{N_-}=g_t^r$.
Hence the supporting manifolds of $F^{-1}a$ and of $G^{-1}a$ are the same, and the normal framing
of $F^{-1}a$ is the extension of the normal framing of $G^{-1}a$ by the natural normal framing
of $S^{d(r-2)-1}$ in $S^{d(r-1)-1}$.
Since $G$ is homotopic to $c\#(-u)$, we may assume that $G^{-1}a$ lies in an $(n-d)$-ball in $\partial(N^\times_-\times[0,1])$.
Thus the map $F$ is homotopic to $c\#\Sigma^d(-u)=c\#(-z)$.
Hence (\ref{l:ldmr}.*) holds.
\end{proof}

\section{Disjunction and realization for a complex}\label{s:prmmw}

The definition of agreement is given analogously to the beginning of \S\ref{s:metar}
for $\Sigma_r$-equivariant maps and homotopies from $\Sigma_r$-equivariant cell complexes $E_0\subset E_1$,
and for $B^{d\times r}$ replaced by $\R^{d\times r}$.


\begin{Theorem}[cf. {\cite[Disjunction Theorem 3.1]{Sk02}}]\label{t:mmwi}
Assume that $rd\ge(r+1)k+3$,

$\bullet$ $K$ is a $k$-complex,


$\bullet$
$E_0\subset E_1 \subset K^{\times r}_{\Delta}$ are $\Sigma_r$-equivariant subsets of $K^r$ (see Figure \ref{f-e0e1}) such that if $j\in\{0,1\}$ and
\linebreak
$\Int(\sigma_1\times\cdots\times\sigma_r)\cap E_j\ne\emptyset$, then $\sigma_1\times\cdots\times\sigma_r\subset E_j$,

$\bullet$ $f_0:K\to \R^d$ is a general position map such that
$f_0^rE_0\cap\diag_r=\emptyset$,

$\bullet$ $\Phi:E_1\to \R^{d\times r}-\diag_r$ is a $\Sigma_r$-equivariant map,

$\bullet$ $H$ is a $\Sigma_r$-equivariant homotopy from $\Phi|_{E_0}$ to $f_0^r|_{E_0}$.


Then there is a general position homotopy $f_t:K\to\R^d$
such that


(\ref{t:mmwi}.1) $f_1^rE_1\cap\diag_r=\emptyset$,

(\ref{t:mmwi}.2) $f_t^rE_0\cap\diag_r=\emptyset$ for each $t$, and

(\ref{t:mmwi}.3) the maps $\Phi$, $f_1^r|_{E_1}$ agree along
the `composition' $\Phi|_{E_0}\overset{H}\simeq f_0^r|_{E_0}\overset{f_t^r}\simeq f_1^r|_{E_0}$.
 \end{Theorem}


\begin{figure}[h]
\centerline{\includegraphics
{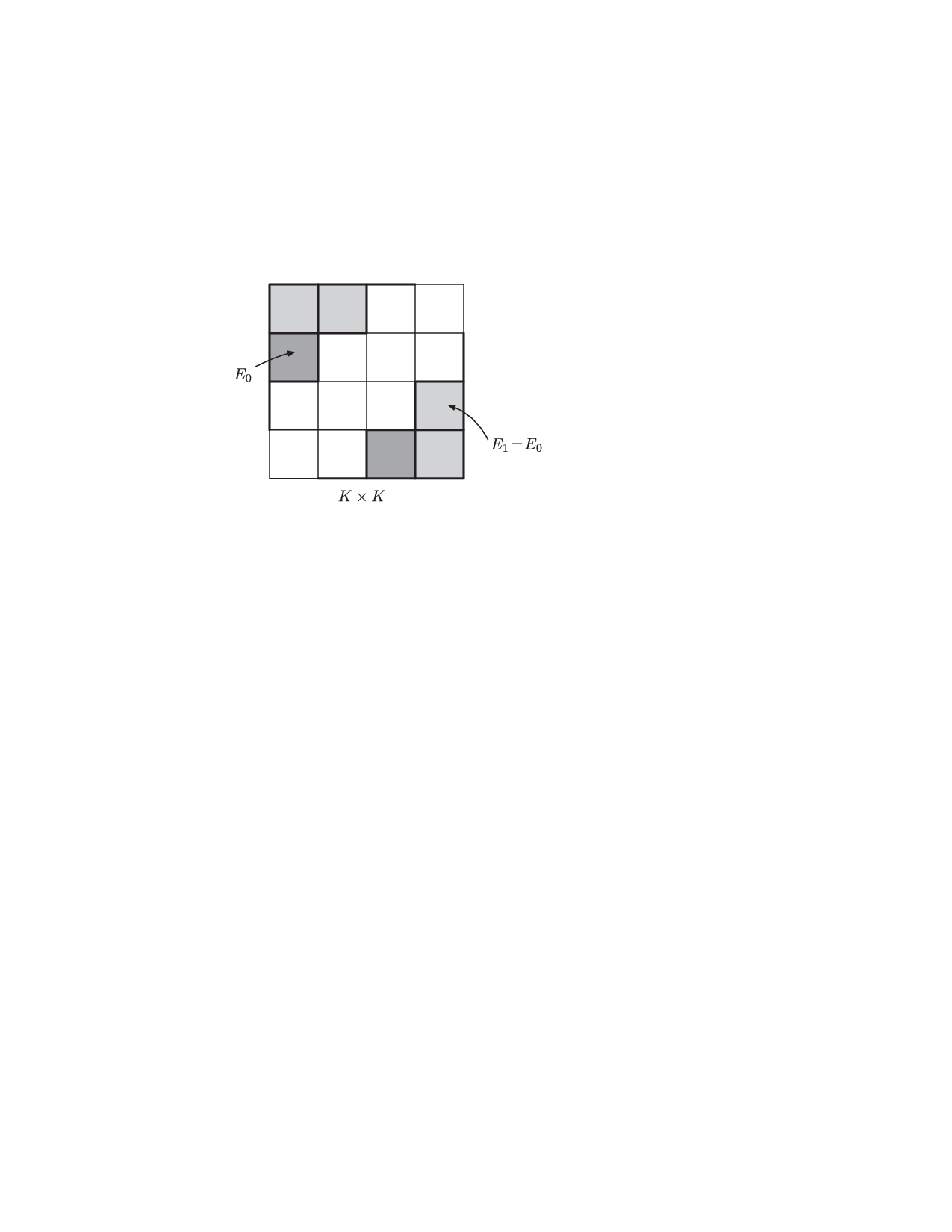}}
\caption{The sets $E_0$ and $E_1$ for $r=2$.}
\label{f-e0e1}
\end{figure}

In this section we prove Theorem \ref{t:mmwi}.
For simplices $\sigma_1,\ldots,\sigma_r$ of a complex denote
$$\sigma^\times:=\sigma_1\times\ldots\times\sigma_r,\quad n_i:=\dim\sigma_i,\quad\text{and}\quad n=n_1+\ldots+n_r.$$


\begin{Lemma}\label{l:ball} Assume that $K$ is a $k$-complex, $\sigma_1,\ldots,\sigma_r$ are
its pairwise disjoint simplices, $f:K\to \R^d$ is a general position PL map such that
$f^r\partial\sigma^\times\cap\diag_r=\emptyset$, and $rd\ge(r+1)k+3$.
Then there is a PL $d$-ball $\beta\subset\R^d$ such that

(\ref{l:ball}.1) $D^{n_i}\cong N_i:=\sigma_j\cap f^{-1}\beta\subset\Int\sigma_j$
and $f|_{N_i}$ is proper for each $i=1,\ldots,r$;

(\ref{l:ball}.2) $f\sigma_1\cap\ldots\cap f\sigma_r\subset\Int\beta$;

(\ref{l:ball}.3) $f^{-1}\beta$ is a regular neighborhood in $K$ of $N:=N_1\sqcup\ldots\sqcup N_r$
relative to $\partial N$.\aronly{\footnote{This implies that $f^{-1}\beta\subset\st(\Int\sigma_1\sqcup\ldots\sqcup\Int\sigma_r)$, i.e., $\tau\cap f^{-1}\beta=\emptyset$ when $\tau$ is a simplex of $K$ such that $\tau\not\supset\sigma_i$ for each $i=1,\ldots,r$.}}
\end{Lemma}

\begin{proof}
The proof is analogous to \cite[\S5]{We67}, see also \cite[Lemma 8]{MW16},
\jonly{\cite[Remark 6.1.b9]{Sk17} and references of \cite[footnote 6]{Sk17}.}
\aronly{Remark \ref{r:sp}.b9 and references of footnote \ref{f:disj}.}
The definitions of a collapsible polyhedron, of a regular neighborhood and of the trace of a collapse can be found e.g. in \cite[\S8.3]{Sk06}.

Denote $Z:=f\sigma_1\cap\ldots\cap f\sigma_r$.
By general position $\dim Z\le n-(r-1)d$.
Take any $i=1,\ldots,r$.
Since $f^r\partial\sigma^\times\cap\diag_r=\emptyset$, we have $Z\cap f\partial\sigma_j=\emptyset$.
Hence there is a collapsible polyhedron $C_i\subset\Int\sigma_i$ of dimension at most $n-(r-1)d+1$ containing $\sigma_i\cap f^{-1}Z$.\aronly{\footnote{We can take $C_i$ to be the trace of $\sigma_i\cap f^{-1}Z$ under a collapse of $\sigma_i$ to a point \cite[\S8.4]{Sk06}, thus avoiding use of a more complicated Zeeman's engulfing lemma \cite[Lemma 7]{MW16}. Analogous remark could be made for the construction of $C$ below.}}
Since
$$n\le kr\quad\text{and}\quad rd\ge(r+1)k+3,\quad\text{we have}\quad 2k+n-(r-1)d+2<2d.$$
Hence by general position we may assume that $C_i\cap S(f)=\emptyset$.

There is a collapsible polyhedron $C\subset\R^d$ of dimension at most $n-(r-1)d+2$ containing
$fC_1\cup \ldots\cup fC_r$.
Since
$$n\le kr\quad\text{and}\quad rd\ge(r+1)k+3,\quad\text{we have}\quad k+n-(r-1)d+2<d.$$
Hence by general position we may assume that $C\cap fK=fC_1\cup\ldots\cup fC_r$ and $C\cap f\sigma_i=fC_i$ for each $i=1,\ldots,r$.
Let us prove that the regular neighborhood $\beta$ of $C$ in $\R^d$ is the required PL $d$-ball.


Since $C\supset Z$, the property (\ref{l:ball}.2) holds.

Since $C\cap f\sigma_i=fC_i$ and $C_i\cap S(f)=\emptyset$, we have $\sigma_i\cap f^{-1}C=C_i$.
Hence $N_i$ is a regular neighborhood of $C_i$ in $\sigma_i$.
This and the collapsibility of $C_i\subset\Int\sigma_i$ imply the property (\ref{l:ball}.1).


Since $C\cap fK=fC_1\cup\ldots\cup fC_r$ and $C_i\cap S(f)=\emptyset$, we have
$f^{-1}C=C_1\sqcup\ldots\sqcup C_r$.
Hence $f^{-1}\beta$ is a regular neighborhood in $K$ of $C_1\sqcup\ldots\sqcup C_r$.
Thus the property (\ref{l:ball}.3) holds.
\end{proof}

\begin{proof}[Proof of Theorem \ref{t:mmwi}]
{\it Preliminaries.}
By induction it suffices to prove Theorem \ref{t:mmwi} for $E_1-E_0$ being the union of the interiors of products obtained by all permutations from the product $\sigma^\times$.
We assume this in the proof.
Abbreviate $f_0$ to $f$.

Since $\partial\sigma^\times\subset E_0$ and $f_0^rE_0\cap\delta_r=\emptyset$,
we can apply Lemma \ref{l:ball}.
We obtain $\beta$ and $N_1,\ldots,N_r$.
Denote
$$N^\times:=N_1\times\ldots\times N_r \quad\text{and}\quad A:=\sigma^\times-\Int N^\times.$$
Then $f^rA\cap\delta_r=\emptyset$.
By the PL Annulus Theorem $\partial\sigma_i$ is a strong deformation retract of $\sigma_i-\Int N_i$.
Hence $\partial\sigma^\times$ is a strong deformation retract of $A$.
Then the restriction $f^r:E_0\cup A\to \R^{d\times r}-\diag_r$ is homotopic to $\Phi|_{E_0\cup A}$.
So by the Equivariant Borsuk Homotopy Extension Theorem (cf. footnote \ref{f:bhet}) there is a
$\Sigma_r$-equivariant homotopy
$E_1\times I\to\R^{d\times r}-\diag_r$ from $\Phi$ to a map whose restriction to $E_0\cup A$ coincides with $f^r$.
So it suffices to prove Theorem \ref{t:mmwi} under the additional assumptions that $\Phi=f^r$ on $E_0\cup A$, and $H$ is the identical homotopy.

\smallskip
{\it Local disjunction.}
Then the map $f^r:\partial N^\times\to\R^{d\times r}-\diag_r$ extends over $N^\times$
(to the map $\Phi|_{N^\times}$).
This and $rd\ge(r+1)k+3$ imply that we can apply the Metastable Local Disjunction Theorem \ref{l:ldm}
for disks $N_i$ and $B^d=\beta$.
We obtain an extension $\overline f:N\to\beta$ of $f|_{\partial N}$ such that
$\overline fN_1\cap\ldots\cap\overline fN_r=\emptyset$.


\smallskip
{\it Local realization.}
Join $f|_N$ and $\overline f$ by a homotopy $f_t$, $t\in[0,1/2]$, fixed on $\partial N$.
Analogously to the above we may assume that $\Phi=f_{1/2}^r$ on $E_0\cup A$.
Define a map
$$z:\partial(N^\times\times[0,1/2])\to\R^{d\times r}-\diag_r\quad\text{by}\quad
z|_{N^\times\times0}=\Phi,\ \ z|_{N^\times\times1/2}=f_{1/2}^r
\ \text{ and }\ z|_{\partial N^\times\times t}=f^r_t.$$
Since $N_i\cong D^{n_i}$ for every $i=1,\ldots,r$, we have $\partial(N^\times\times[0,1/2])\cong S^n$.
So $z$ can be considered as a map $S^n\to \R^{d\times r}-\diag_r$.
Since $rd\ge(r+1)k+3$, we can apply the Metastable Local Realization Theorem \ref{l:ldmr} for disks $N_i$, $B^d=\beta$, and taking $f_0$ to be $f_{1/2}$.
We obtain a homotopy $f_t$, $t\in[1/2,1]$, for which the maps $f_{1/2}^r\#z$, $f_1^r$ of $N^\times$ agree along the homotopy $f^r_t$ on $\partial N^\times$, $t\in[1/2,1]$.
Hence the maps $\Phi|_{N^\times}$, $f_1^r$ of $N^\times$ agree along the homotopy $f^r_t$ on $\partial N^\times$, $t\in[0,1]$.

\smallskip
{\it Completion.}
Extend the homotopy $f_t:N\to\beta$, $t\in[0,1]$, arbitrarily to a homotopy $f_t:f^{-1}\beta\to\beta$ fixed on $f^{-1}\partial\beta$.
Extend the obtained homotopy identically to $K-f^{-1}\beta$.
Denote the obtained extension by the same letter $f_t$.
Below we check the property (\ref{t:mmwi}.2).
Then the property (\ref{t:mmwi}.1) is equivalent to $f_1N_1\cap\ldots\cap f_1N_r=\emptyset$, which is `achieved' (for $f_1$ replaced by $\overline f=f_{1/2}$) at the `local disjunction' step and `kept' at the `local realization' step.
The property (\ref{t:mmwi}.3) (is possibly destroyed at the `local disjunction' step but) is `achieved' at the `local realization' step.
So $f_t$ is as required.


\smallskip
{\it Checking the property (\ref{t:mmwi}.2).}
Take any cell $\tau^\times:=\tau_1\times\ldots\times\tau_r\subset E_0$.
The cell $\tau^\times$ does not contain any cell obtained from $\sigma^\times$ by permutation of factors.
We have $\tau_j\cap\tau_k=\emptyset$ for each $j\ne k$. 
Then there is $j$ such that $\tau_j\not\supset\sigma_i$ for each $i$.
Hence by (\ref{l:ball}.3) $\tau_j\subset K-f^{-1}\beta$.
So $f_t=f$ on $\tau_j$.
Thus $f_t\tau_1\cap\dots\cap f_t\tau_r\subset\Int\beta\cap(\R^d-\beta)=\emptyset$.
\end{proof}

\aronly{

\begin{Remark}\label{r:conj}
(a) In Theorem \ref{l:ldm} if $f$ is piecewise linear (PL) / smooth on $\partial N$ (in the PL case assume that $B^d:=[0,1]^d$), then we may assume that $\overline f$ is PL / smooth (by approximation).

(b) Relative version of Theorem \ref{t:mmwi} perhaps allows
classification of ornaments, cf. \cite[\S1.3]{AMSW}.

(c) Assume that $rd\ge(r+1)k+4$ and $K$ is a $k$-complex.
We conjecture that the map $f\mapsto f^r$ defines a 1--1 correspondence between almost $r$-embeddings $K\to\R^d$ up to homotopy through almost $r$-embeddings, and $\Sigma_r$-equivariant maps
$K^{\times r}_{\Delta}\to \R^{d\times r}-\diag\phantom{}_r$ up to $\Sigma_r$-equivariant homotopy.

In this conjecture the surjectivity holds by Theorem \ref{t:mmwi} below (for $E_0=\emptyset$
and $E_1=K^{\times r}_{\Delta}$).
The injectivity can perhaps be proved using a relative version of Theorem \ref{t:mmwi} and the idea of \cite[\S5]{Sk02}.

(d) The connectedness and the stable parallelizability assumptions in Theorems \ref{l:ldm} and \ref{l:ldmr} can perhaps be omitted; these results presumably hold even for polyhedra not manifolds $N_i$).
Such more general versions could perhaps be proved using Theorem \ref{t:mmwi} (whose proof would still use current versions Theorems \ref{l:ldm} and \ref{l:ldmr} proved independently of Theorem \ref{t:mmwi}).

(e) Our proof can perhaps be modified to show that in Theorem \ref{l:ldm} (and in Propositions \ref{l:ldmin}) when and $f|_{N_i}$ is an embedding for each $i$, we may obtain additionally that $\overline f|_{N_i}$ is an embedding isotopic to $f|_{N_i}$, relative to the boundary, for each $i$, cf. \cite[Lemma~10]{MW15}.
Analogous modifications can perhaps be done in Theorem \ref{l:ldmr} and in Proposition \ref{l:ldmrin} when
$d-n_i\ge3$ (this codimension 3 assumption is automatic for Theorem \ref{l:ldm} but not for Theorem \ref{l:ldmr}: take $d=2r-1=n_i+2$).
For Propositions \ref{l:ldmin} and \ref{l:ldmrin} this is presumably a standard application of the corresponding `injective' version of the Surgery of Intersection Lemma \ref{l:surg}, together with the Haefliger-Zeeman Unknotting Theorems, cf. \cite[\S4, proof of Proposition 2 presented before Proposition 2]{We67},
\cite[proofs of Propositions 8.4 and 8.5, a generalization of the Whitney trick]{Sk06}.
\end{Remark}

}

\section{Appendix: proof of the Surgery of Intersection Lemma}\label{s:apprsuin}

By the transversality of $f|_P$ and $f|_Q$,
the set $M(f)$ is a $(p+q-d)$-manifold.
Take any $j\in\{0,1,2,\ldots,k\}$.
Assume additionally that $M(f)$ is $(j-1)$-connected and $u:S^j\to M(f)$ is a map.
It suffices to construct a
$(q-k-2)$-frimmersion $f':P\sqcup Q\to B^d$ such that $f'=f$ on $\partial P\sqcup Q$, and $M(f')$ is $(j-1)$-connected and
$$(*)\qquad\pi_i(M(f'))\cong\pi_i(M(f))\quad\text{for each}\quad i<j\quad\text{and}\quad \pi_j(M(f'))\cong\pi_j(M(f))/\left<u\right>.$$
Here $\left<u\right>$ is certain (for $j=1$ normal) subgroup containing the homotopy class of $u$.
For $j=0$ the latter condition is replaced by `$M(f')$ has a smaller number of connected components than $M(f)$'.

By the self-transversality of $f|_P$, the set $S(f|_P)$ is the image of an immersion of a $(2p-d)$-manifold.
This and the hypothesis on $S(f|_Q)$ imply that $S(f\pr_Q:M(f)\to B^d)$ is the image of a self-transverse immersion
of a finite disjoint union of manifolds of dimensions at most
$$\max\{2p+q-2d,q-k-2+p-d\}\le p+q-d-k-1,$$
where the inequality follows from $k\le d-p-1$.
This and $2j\le2k<p+q-d$ by general position\aronly{\footnote{I.e., by the following result: {\it if $M$ is an    $n$-manifold, $u:S^j\to M$ a map, $2j<n$ and $X\subset M$ is the image of a self-transverse immersion of a finite disjoint union of manifolds of dimensions at most $n-j-1$, then $u$ homotopic to an embedding $S^j\to M$ whose image misses $X$}.
Other general position results of this kind are used below without explicit statement.}}
imply that $u$ is homotopic to an embedding whose composition with $f\pr_Q$ is an embedding into $B^d$.
For brevity, denote the embedding by the same letter $u$.
Then the compositions $\pr_Pu$ and $\pr_Qu$ are also embeddings $S^j\to P$ and $S^j\to Q$.

By the transversality of $f|_P$ and $f|_Q$ the sphere $\pr_PuS^j$ has a normal $(d-q)$-framing in $P$.
Push the sphere along the first vector field of this framing.
We obtain an embedding into $P$ of the product of the sphere with the interval.
We have
$$j\le k,\quad k+1+(2p-d)<p,\quad k+1+(q-k-2)<q\quad\text{and}\quad 2k+2\le p+q-d\le p,q.$$
Thus by general position the embedding extends to an embedding  $u_P:D^{j+1}\to P$  whose composition with $f$ is  an embedding into $B^d$.
Since $d-k-2\ge p$, we have $2d\ge 2p+2k+4>2p+k+1$.
Hence by general position $u_PD^{j+1}\cap S(f|_P)=\emptyset$.

Analogously we construct an embedding $u_Q:D^{j+1}\to Q$  whose composition with $f$ is  an embedding into $B^d$.
Since $d-k-2\ge q$, we have $d\ge q+k+2>q-k-2+k+1$.
Hence by general position $u_QD^{j+1}\cap S(f|_Q)=\emptyset$.

We have $fu_PD^{j+1}\cap fu_QD^{j+1}=fu_PS^j=fu_QS^j\cong S^j$.
Push $fu_PD^{j+1}$ and $fu_QD^{j+1}$ along the first vector field of the normal framing of $f$.
We obtain an embedding into $B^d$ of the product of the $(j+1)$-sphere $fu_PD^{j+1}\cup fu_QD^{j+1}$ with the interval (with corners along $fu_PS^j=fu_QS^j$).
We have
$$j\le k,\ d-k-2>p,q\text{ and }2(k+2)-d\le p+q-2d+2\le2(d-k-2)-2d+2=-2(k+1)<0,$$
Thus by general position the embedding extends to an embedding $\overline u:D^{j+2}\to B^d$ (with corners along $fu_PS^j=fu_QS^j$) such that $\overline uD^{j+2}\cap f(P\sqcup Q)=fu_PD^{j+1}\cup fu_QD^{j+1}$.


Denote by $V_{m,n}$ the Stiefel manifold of orthonormal $n$-frames in $\R^m$.
Since $2j\le 2k<p+q-d-1$, we have $j+d-q-1<p-j-1$, so $\pi_j(V_{p-j-1,d-q-1})=0$.
Hence the normal $(d-q-1)$-framing of $u_PS^j$ in $P$ extends to a normal $(d-q-1)$-framing of $u_PD^{j+1}$ in $P$.
Take normal $(d-p-1)$-framing of $fu_PD^{j+1}$ in $B^d$ obtained by omitting the first vector field of the normal framing of $f$.
Take analogous normal $(d-p-1)$-framing of $u_QD^{j+1}$ in $Q$ extending the normal $(d-p-1)$-framing of $u_QS^j$ in $Q$, and normal $(d-q-1)$-framing of $fu_QD^{j+1}$ in $B^d$.
Since $2j\le 2k<p-2$, we have $j+1+d-p-1<d-j-2$, so $\pi_j(V_{p-j-1,d-q-1})=0$.
So the normal $(d-q-1)$-framing of $\overline uS^{j+1}$ in $B^d$ extends to a normal $(d-q-1)$-framing of $\overline uD^{j+2}$ in $B^d$.
Analogously the normal $(d-p-1)$-framing of $\overline uS^{j+1}$ in $B^d$ extends to a normal $(d-p-1)$-framing of $\overline uD^{j+2}$ in $B^d$.

Thus the relevant framing information along $\overline uD^{j+2}$ agrees with that of the following standard model.
Represent
$$\R^d=\R\times\R^{j+1}\times\R^{p+q+d-j}\times\R^{d-p-1}\times\R^{d-q-1},$$
$$\R^p=\R^{j+1}\times\R^{p+q+d-j}\times\R^{d-q-1}\quad\text{and}\quad \R^q=\R^{j+1}\times\R^{p+q+d-j}\times\R^{d-p-1}.$$
Take two embeddings $g_P:\R^p\to\R^d$ and $g_Q:\R^q\to\R^d$ intersecting transversally along
$0\times S^j\times\R^{p+q-d-j}\times0\times0$.
For example, one may take $g_Q(x, y,b): =(|x|^2 - 1, x, y, 0, b)$ and $g_P(x, y, a) = (1 - |x|^2, x, y, a, 0)$. The sphere $S^j$ bounds the ball $D^{j+1}\subset\R^{j+1}\subset\R^p,\R^q$.
Furthermore, $g_PD^{j+1}\cup g_QD^{j+1}\cong S^{j+1}$ with corners along $0\times S^j\times 0\times0\times0$.
This $(j+1)$-sphere bounds a $(j+2)$-ball in $\R\times\R^{j+1}\subset\R^d$.
So there is a diffeomorphism between a neighborhood of $\overline D^{j+2}$ and $\R^m$ taking a restriction of $f$ to $g_P\sqcup g_Q$.

Change $g_PD^{j+1}$
by pushing across the $(j+2)$-ball.
Let $f'$ be the map obtained by the inverse diffeomorphism.
Then $M(f')$ is obtained from $M(f)$ by surgery killing the spheroid $u$.
(Indeed, the double points of the `pushing' regular homotopy form the trace of this surgery.)
Since $2k+2\le p+q-d$, this is surgery below the middle dimension in the sense of \cite{Mi61}, so (*) holds.\aronly{\footnote{For this reason in \cite[Appendix A]{HK98} one needs to replace `$2s<\dim D$' by `$2s+1<\dim D$' as in  \cite[Appendix, (ii)]{CRS}.}}
Since $u_PD^{j+1}\cap S(f|_P)=u_QD^{j+1}\cap S(f|_Q)=\emptyset$, the map $f'$ a $(q-k-2)$-frimmersion.
Thus $f'$ is as required.




\aronly{

\section{Appendix: on the Mabillard-Wagner paper}\label{s:apmawa}

Metastable Local Disjunction Theorem \ref{l:ldm} and Metastable Mabillard-Wagner Theorem \ref{t:mmw}
are claimed in \cite{MW16, MW16'}
(Theorem \ref{l:ldm} was claimed only for disks $N_i$ and in a slightly different equivalent formulation).
However, because of Remark \ref{r:sp} below I do not find \cite{MW16} a reliable reference for this result
(see  \cite[p. 2]{Sk21d} for the explanation of what is meant by a `reliable reference' here;
version 1 of \cite{MW16} and the paper \cite{MW16'} contains even less details.)
See also Remark \ref{r:imm}.b.

Uli Wagner and I find open publication of this criticism of \cite{MW16, MW16'} important to stimulate appearance of a reliable reference.
See also \cite[Examples 3.2 and 3.3]{Sk21d}.
This section might also be interesting as an example of an open discussion of a controversial question,
carried in full mutual respect of participants of the discussion.
This should be compared to Remark \ref{r:uwn} and \cite[\S6]{Sk21d} exhibiting an example of misuse of the anonymous peer review system to promote an opinion which does not stand an open discussion (and thus violating principles of scientific discussions recalled \cite[Remark 4.2.b]{Sk21d}).


In 2017 we discussed with I. Mabillard and U. Wagner the criticism of Remark \ref{r:sp} below (excluding the `added later' parts) \cite{Sk17o}.
I asked them if they agree that {\it the proof of the main results in \cite{MW16} is} incomplete {\it in the following sense.}\footnote{This notion of `incomplete' roughly corresponds to more common `not a reliable reference', see \cite[p. 2]{Sk21d}. Unfortunately, shorter formulations of the conclusion of Remark \ref{r:sp} were found to be potentially misleading.
No other meaning of `{\it incomplete}' is meant here.
In particular, I have nothing against publication of {\it conjectures} with {\it incomplete} proofs.
And I do not mean that the proof of \cite{MW16} cannot be corrected.}
I call a proof {\it incomplete} if one mathematician should be able to expect from another

(1) to wait for another (`complete') proof {\it before} using results having such a proof;

(2) to recommend, as a referee, a revision (based on specific comments) {\it before} recommending publication of  results having such a proof;

(3) to work more on such a proof (in particular, send the text privately to a small number of mathematicians working on related problems), {\it before} submitting the text to a refereed journal or to arxiv.

I received no answer to the above question.
However, I received in 2017 Remark \ref{r:uw} below \cite{Sk17o}.
It first very politely agrees with (b1)-(b7) of Remark \ref{r:sp}.
The rest of Remark \ref{r:uw} is not relevant to discussing whether the proof in \cite{MW16} is {\it incomplete} in the above sense, because it discusses a non-existent update of \cite{MW16} (by making some suggestions on that update).
Cf. \cite[Remark 2.3.abcd]{Sk21d}.
However, since U. Wagner looked at most of my comments in footnotes and suggested to publish Remark \ref{r:uw},
I am glad to include it here.

\begin{Remark}\label{r:sp}
(a) Besides the main result of \cite{MW16}, I do not know any result\footnote{Except possibly a reference kindly sent to me by I. Mabillard.}
whose formulation does not use block bundles theory \cite{RS68} but which could be proved using this theory, and whose proof not using this theory is unknown or hard.
(Although Rourke and Sanderson might have been aware of such results while they wrote \cite{RS68}.)
For me it was always hard to apply block bundles theory for such a result, and it was easier to use other means,
even if the result was invented by guessing that in a given situation PL manifolds behave analogously to smooth
ones, see \cite[\S4]{Sk02}, \cite[Remark 21]{MW16},
\cite{CS} and this paper.
So application of block bundles theory is a delicate part of the proof.
The corresponding Lemmas 16, 19 and 24 \cite{MW16} are not rigorously stated and proved, see specific remarks
in (b1)-(b9) below.
These lemmas are important for  the proof.
Accurate statements of the lemmas would be more technical.

Checking proofs and use of the lemmas in their accurate statements would become an important task
for the authors.
As always, in performing this task new problems might or might not be discovered
(an example of such a discovery is (b8)).
There is no way to see how important or negligible these problems are, except authors doing this work and looking critically on the new text.

(b1) P. 25, proof of Lemma 10, First Part.  Theorem 7 is not applicable because Proposition 13 does not
assert that $\sigma_i\cap\sigma_r$ is a PL manifold.
This assumption is tacitly used in the proof of Proposition 13 (i.e. of Lemma 14, \S4.1) but never checked.
This assumption is non-trivial to check, cf. footnote 11 in version 2 of \cite{AMSW}.

(b2) In Lemma 16 {\it unknottedness}, i.e. the existence of a homeomorphism (or an isotopy), is tacitly replaced by {\it equality}; we know it is dangerous to identify isomorphic objects; cf. use of `$\cong$' not `$=$' in the analogous situations in Lemma 19(2) and in Lemma 20;

(b3) In Lemma 16 `we can assume' is way too informal for this delicate part of the proof;

(b4) In Lemma 16 it is not clear where the restatement of the transversality (the second `i.e.') ends: at `in $B^d$' or at the end of the display formula;

(b5) In the proof of Lemma 16 `follows by Theorem 66' is not clear because no block bundle (required for application of Theorem 66) is given in the statement of Lemma 16;

(b6) In Lemmas 19, 20 and 24 `In the situation given by Lemma NN' is unclear because it is not indicated whether the assumption or the conclusion of Lemma NN is meant;

(b7) In the proof of Lemma 19 `the first property follows from Theorem 70 $<$presumably meaning Proposition 70$>$ from Appendix A' is not clear because Proposition 70 does not assert any triviality (or the existence of any homeomorphism, which the triviality means).

(b8) Observation 18 should be turned into a formal statement having formal proof.
Otherwise in Observation 18 `Since $k\le s-(r-1)d$' is not clear because
it is not written what are the hypothesis of Observation 18, and in the first line of the proof of Lemma 19
`follows by ... the above observation' is not clear because it is not written which exactly statement is meant by
`the above observation'.
This done, one would see that one needs to prove that `if $2k+1\le s_i+s_r-d$, then the unstated conclusion of Observation 18 holds' (currently this part of the proof is ignored).
Then one would see that one needs the restriction $2k+2\le s_i+s_r-d$, but  not $2k+1\le s_i+s_r-d$.
(Added later: unless one wants to show that methods of \cite{KM63}, not only of the cited paper \cite{Mi61},
are applicable.)

(b9) The statement 8.3 meaning $B^d\cap fK=B^d\cap f(\sigma_1\sqcup\ldots\sqcup\sigma_r)$ is wrong and
should be replaced by $B^d\cap fK=B^d\cap f\st(\sigma_1\sqcup\ldots\sqcup\sigma_r)$; corresponding changes are required in applications of Lemma 8.
(Added later: as in \S\ref{s:prmmw} and in \cite[p. 327, (d)]{Sk06}.)



(b10; added later) In \S4.5, Proof of Lemma 10, First Part, the intersection $\sigma_i\cap\sigma_r$ is a manifold not a ball, so $\partial(\sigma_i\cap\sigma_r)$ is not a strong deformation retract of $\sigma_i\cap\sigma_r-\Int N$.
Thus the argument from the proof of Lemma 12 is not applicable for `retracting from $B^d$ to the ball $N$'.
(Also `proof of Theorem 2' is hard to find in the paper, and when a reader finds it in \S3, he/she sees there no `equation 4'. Presumably equation 4 `in the proof of Lemma 12' not `in the proof of Theorem 2' is meant.)

(b11; added later) In \S4.2, proof of Proposition 30, it is not proved that we may obtain additionally that on every $\sigma_i$ the new $f$  is an embedding isotopic to $f|_{N_i}$, relative to the boundary, for each $i$
\cite[proofs of Propositions 8.4 and 8.5, a generalization of the Whitney trick]{Sk06}.
\end{Remark}


\begin{Remark}\label{r:uw} {\it This is U. Wagner's public response to the criticism of Remark \ref{r:sp} (excluding the `added later' parts)}, only footnotes are mine.
The letter refers to the previous numeration \cite{Sk17o} where (b1)-(b9) of Remark \ref{r:sp} were given as unnumbered bullet points of Remark 3(b).

Dear Arkadiy,

I agree with many of the specific criticisms (all but the last two bullet points in Remark 3(b)).

I agree that these statements and formulations should be made more precise.
(I am not sure regarding the modified dimension restriction you mention in the last
but one bullet points in Remark 3(b), I have to think about this more.)

I think these are helpful and valid criticisms, and we will address them in the next revision.
However, as I tried to explain by skype, I think these things can be easily fixed (in the sense that
no new ideas are needed)  to arrive at a version of the proof that is hopefully
complete according to your definition.\footnote{\label{f:unc} AS: This sentence and the next paragraph are misleading because they convey the author's disagreement with the statement `the proof of \cite{MW16} is {\it incomplete} (in the practical sense described above after Conjecture \ref{t:mmw})',  without explicitly stating disagreement  and so taking responsibility for such a statement.
Instead of explicit disagreement with the {\it incomplete}, the author introduces vague notions of `easily fixed in the sense that no new ideas are needed' and `gaps as opposed to issues of improving the presentation'.
\newline
E.g. working in the smooth category as in
this paper may be regarded as not a new idea (but rather as
`un-introduction' of the new idea of making an argument, originally introduced in the smooth category, work in the PL category).
However, I do not consider
this paper as an `easy fix'.
We also know that {\it realization} of an idea may be harder than its {\it introduction}.
We can learn that there are no `gaps' in a new proof only if the proof has `presentation', which is not
{\it incomplete} (in the above sense), cf. the second paragraph of Remark \ref{r:sp}.a and \cite[Remarks 2.3.abcd]{Sk21d}.}
A number of your remarks can be addressed by straightforward rewording or changing of punctuation.\footnote{\label{f:exp} AS: I welcome these explanations.
However, since the author neither privately distributed the corresponding update of \cite{MW16}, nor submitted it to arxiv, nor gave a list of {\it all} required changes, nor explicitly disagreed with the {\it incompleteness} of the proof, these explanations are not relevant to discussing the {\it incompleteness}.
So I suggested to omit this part of discussion until the revised version will be ready.
\newline
I would also call these explanations `continuing to write a complete proof' rather than `rewording or changing of punctuation'.
Cf. the second paragraph of Remark \ref{r:sp}.a and \cite[Remarks 2.3.abcd]{Sk21d}.
}
In his email from yesterday (February 13), Isaac also gave quite detailed written explanations concerning your remarks; in the following, I repeat some of these explanations, with Isaac's permission:

\begin{itemize}
\item \emph{in Lemmas 19, 20 and 24 ``In the situation given by Lemma NN'' is unclear because
it is not indicated whether the assumption or the conclusion of Lemma NN is meant}

It means under the same hypotheses and using the same notation as in Lemma 10.

\item \emph{in Lemma 16 it is not clear where the restatement of the transversality (the second `i.e.') ends:
at `in $B^d$' or at the end of the display formula;}

it starts at ``i.e.'' and ends at the end of the sentence.

\item \emph{Theorem 7 is not applicable because Proposition 13 does not assert that $\sigma_i\cap \sigma_j$ is a PL manifold.}

$\sigma_i$ and $\sigma_j$ are PL-balls properly embedded inside of a bigger ball, and by general position it is a PL manifold See, e.g., Theorem 1 in Armstrong \& Zeeman, Transversality for PL Manifolds.
\footnote{AS: In the same-day e-mail I recalled the rest of my remark: `this assumption is non-trivial to check, cf. footnote 11 in version 2 of \cite{AMSW}'.
(Added in 2021: the footnote explains problems with using PL transversality unresolved either in \cite{MW16}
or in Isaac's e-mail.)}

\item \emph{in Lemma 16 `we can assume' is way too informal for this delicate part of the proof;}

It means ``after an epsilon-perturbation''

\item \emph{in the proof of Lemma 16 `follows by Theorem 66' is not clear because no block bundle
(required for application of Theorem 66) is given in the statement of Lemma 16.}

The block bundle in question is $\sigma^r \times \varepsilon [-1,1]^{d-s_r}$,
which exists by unknottedness in codimension 3.

\item \emph{in the proof of Lemma 19 `the first property follows from Theorem 70 (presumably meaning Proposition 70) from Appendix A' is not clear because Proposition 70 does not assert any triviality (or the existence of any homeomorphism, which the triviality means).}

The last sentence of Proposition 70 is ``Then any normal bundle over $S^k$ in $M$ is trivial''. This means that any normal (block) bundle over $S^k$ in $M$ (in the sense of Theorem 54) is trivial in the sense of Remark 48.(b) and Def 49. (i.e., the existence of a homeomorphism).
\end{itemize}


For these reasons, I am not convinced that your comments constitute ``gaps'' in our proof
(as opposed to issues of improving the presentation) or that they justify your statement
that our theorems should be considered unproved conjectures.

I understand that you maybe do not consider these explanations sufficient;\footnote{AS: See footnote \ref{f:exp}.}
or maybe your position is (as I believe you said during our skype meeting) that this amounts to
recovering a complete proof, rather than evidence that your comments do not constitute gaps.\footnote{AS: I did not say/write this and I do not understand this explanation of my position.
Sorry for my insufficient knowledge of English.}

From our conversations, I also understand that you consider the words ``easy to fix'', ``gap'',
and ``no new ideas needed'' as vague and not practical. I agree that these it may be hard
to arrive at a general definition of these terms, but I do think that mathematicians often use
these words and reach agreements in specific cases.\footnote{AS: These expressions
are appropriate in a text meant to be vague, but are misleading in a reply to a practical controversial question.
Discussion of a practical question helps to reach an agreement (or an explicit disagreement, which is also valuable).
Bringing into discussion superfluous vague notions helps to convey disagreement,  without taking responsibility for explicitly stating disagreement.}

You suggest a definition of an ``incomplete proof'' in your note.
I am not convinced that your definition is more precise or more practical.\footnote{AS: The above notion of {\it incomplete} is of course not precise.
However, it is explained in terms of practical decisions which mathematicians have regularly to make.}
It is also based on and uses as defining properties social notions of what one
``mathematician should be able to expect from another'', in particular

\begin{itemize}
\item[(2)] ``to recommend, as a referee (based on specific comments) before
recommending publication of results having such a proof;''
\end{itemize}

I am not sure that this serves as a definition of ``incomplete''.
I have certainly both written and received referee reports that recommended revisions without anybody stating
that this meant the proofs were incomplete or the results unproved.\footnote{AS: This is a confusion of a general vague notion of `incomplete' and the practical explanation of `incomplete' above.
Cf. ``No other meaning of `{\it incomplete}' is meant here'' above.
The author can replace the word `incomplete' by `XYZ' in the practical explanation, if it would make it easier to answer the question: {\it do you agree that the proof of the main results in
\cite{MW16} is} XYZ {\it in the above sense?}.}

Since we were not able to reach an agreement on this,
I agree that you should make your criticism public, and we work on a revision.
To me that seems the most productive way forward, and hopefully once the revision is ready we can reach
an agreement regarding the status of our results and proofs.

Best regards,
Uli
\end{Remark}

\comment

, because it gives a bad impression: why rush to publicize partial discussion of comments instead of complete work on a revision

In one point, however, I think I disagree with you:
\footnote{AS: With which exactly statement among written above? }
I see a difference between an arXiv preprint and a final journal version.
Specifically, I think there is value in putting a preprint on the arxiv, even if it is not as polished as one would like, precisely
to get feedback (including from colleagues one does not know who may find the ideas interesting).
\footnote{AS: There is no disagreement because I never stated the opposite.
This remark is misleading because `not as polished as one would like' is vague, see footnote \ref{f:unc},
and because the important practical difference between stating a conjecture and stating a theorem is ignored.
I suggested to replace this paragraph to a statement of whether in view of Remark \ref{r:sp} the author finds
the statement and proof of the main results in \cite{MW16} {\it arxiv-conjectural} (I do).
I call a proof {\it arxiv-conjectural} if one mathematician should be able to expect from another, in a public
arxiv submission, to state a result having such a proof as a conjecture (rather than as a theorem).
No other meaning of `arxiv-conjectural' is meant here.
In particular, I do not mean that at a seminar talk, a result having such a proof should be called a conjecture
(rather than a theorem).}

\endcomment

\begin{Remark}\label{r:uwn}
(a)  An anonymous referee (of version arXiv:1704.00143v2) of this paper misused the anonymous peer review system by presenting criticism whose invalidity becomes clear when its publication is suggested, see \cite[\S6]{Sk21d}.
In particular, the referee stated that the authors of \cite{MW16} do not receive due credit, cf. Remark \ref{r:imm}.b and (b,c) below.

Preliminary versions of the current paper and of \cite{Sk17o} were sent in 2017 before arxiv submissions to
several mathematicians including P. Blagojevi{\'c}, F. Frick, I. Mabillard, S. Melikhov, A. Sz\"ucs, U. Wagner and G. Ziegler.
All criticism any of them wanted to publicly share was presented in arxiv versions.
Some criticism some of them did not want to publicly share.
Later U. Wagner informed me of some private criticism of \cite{Sk17o},
so I asked him to convey my request to send me or to publish this criticism.
None was sent or published.
In (b,c) I present slightly abridged letters to I. Mabillard and U. Wagner,
to which I did not receive any public answers.
{\it This lack of public criticism of \cite{Sk17o} confirms that mathematicians do not have any justification,
up to the standards of a scientific discussion, that the paper arXiv:1704.00143v2 gives insufficient credit to \cite{MW16}.}

(b) {\it Jul 26, 2020.} Dear Uli and Isaac,

I would be grateful if you could either confirm that the attached paper
(which should be the same as arXiv:1704.00143v2) gives due credit to \cite{MW16},
or suggest what could I change there so as to give due credit to \cite{MW16}.
The most relevant part is Remarks 1.4.ab.

I am flexible in terms of credit distribution
(although I am keen on mathematics behind credit distribution).
I do not want to have a priority argument with you.
So I tried to make the attached paper as independent of arXiv:1702.04259 as possible.
E.g. I have the following phrase in Remark 1.4.b:
`In spite of all that I call Theorem 1.2 Metastable Mabillard-Wagner Theorem, in order to concentrate
on mathematics and on reliability standards for research papers, not on priority questions.'
I am willing to give more credits to \cite{MW16} if you think that Remarks 1.4.ab are not proper or insufficient.

I would also be grateful for your comments on the proof in the attached paper.
However, I do understand that this could require much more time than
your recommendations or confirmation concerning due credit to \cite{MW16}.


Best, Arkadiy.

(c) {\it Aug 07, 2020.} Dear Uli, 

We strongly need this discussion to be responsible.   
We do not have time resources to discuss preliminary ideas that do not stand a suggestion to make them public.
Therefore I inform you that each of us can possibly publish any letter on the subject `does arXiv:1704.00143 give proper credit to arXiv:1601.00876v2 ?' --- in particular, this letter and the part of my July 26 letter in Remark 1.4.c.
After presenting the letters we can possibly write whether we agree or disagree, and/or give explanations.
If a part of such a public discussion would become obsolete, we could delete that part (only) by our mutual consent. 

I will only cite in arXiv updates of arXiv:1704.00143v2 this public correspondence, not our emails and Skype discussions to which the math community does not have access and which could be poorly written and less responsible. 

It would be nice to have your reply to my letter of July 26. 
Please just resent your private reply of Aug 7 if you think it is up to the standards of scientific discussion.

Best, A. 
\end{Remark}

}

{\it In this list books, surveys and expository papers are marked by stars}

\end{document}